\numberwithin{equation}{section}
\newtheorem{theorem}{Theorem}[section]
\newtheorem{corollary}{Corollary}[section]
\newtheorem{lemma}{Lemma}[section]
\theoremstyle{definition}
\newtheorem{definition}{Definition}[section]
\theoremstyle{remark}
\newtheorem{remark}{Remark}[section]
\newcommand{\Rnum}[1]{\uppercase\expandafter{\romannumeral #1\relax}}
\newcommand{\mr}[1]{\mathrm{#1}}
\newcommand{\mb}[1]{\mathbb{#1}}
\newcommand{\mc}[1]{\mathcal{#1}}
\newcommand\numberthis{\addtocounter{equation}{1}\tag{\theequation}}
\DeclareMathOperator{\Av}{Av} 
\def\clap#1{\hbox to 0pt{\hss#1\hss}}
\title{A multiparameter integral inequality for the dyadic maximal operator and applications}
\author{Eleftherios N. Nikolidakis}
\date{\today}
\begin{document}
\maketitle

\begin{abstract}
We prove a sharp multiparameter integral inequality for the dyadic maximal operator which refines the one-parameter inequality that is given by A. Melas in \cite{M} which in turn is applied for the evaluation of the Bellman function of two integral variables for this maximal operator. Moreover we find the exact domain of definition of the related Bellman function of three integral variables and by using the results connected with the sharpness of this new multiparameter inequality we give lower bounds of this Bellman function.
\end{abstract}

\section{Introduction} \label{sec:1}
The dyadic maximal operator on $\mb R^n$ is a useful tool in analysis and is defined by
\begin{equation} \label{eq:1p1}
\mc M_d\phi(x) = \sup\left\{ \frac{1}{|S|} \int_S |\phi(u)|\,\mr du: x\in S,\ S\subseteq \mb R^n\ \text{is a dyadic cube} \right\},
\end{equation}
for every $\phi\in L^1_\text{loc}(\mb R^n)$, where $|\cdot|$ denotes the Lebesgue measure on $\mb R^n$, and the dyadic cubes are those formed by the grids $2^{-N}\mb Z^n$, for $N=0, 1, 2, \ldots$.\\
It is well known that it satisfies the following weak type (1,1) inequality
\begin{equation} \label{eq:1p2}
\left|\left\{ x\in\mb R^n: \mc M_d\phi(x) > \lambda \right\}\right| \leq \frac{1}{\lambda} \int_{\left\{\mc M_d\phi > \lambda\right\}} |\phi(u)|\,\mr du,
\end{equation}
for every $\phi\in L^1(\mb R^n)$, and every $\lambda>0$,
from which it is easy to get the following  $L^p$-inequality
\begin{equation} \label{eq:1p3}
\|\mc M_d\phi\|_p \leq \frac{p}{p-1} \|\phi\|_p,
\end{equation}
for every $p>1$, and every $\phi\in L^p(\mb R^n)$.
It is easy to see that the weak type inequality (\ref{eq:1p2}) is the best possible. For refinements of this inequality one can consult \cite{N6}.

It has also been proved that (\ref{eq:1p3}) is best possible (see \cite{B1} and \cite{B2} for general martingales and \cite{W} for dyadic ones). An approach for studying the behaviour of this maximal operator in more depth is the introduction of the so-called Bellman functions which play the role of generalized norms of $\mc M_d$. Such functions related to the $L^p$-inequality (\ref{eq:1p3}) have been precisely identified in \cite{M}, \cite{N5} and \cite{NM}. For the study of the Bellman functions of $\mc M_d$, we use the notation $\Av_E(\psi)=\frac{1}{|E|} \int_E \psi$, whenever $E$ is a Lebesgue measurable subset of $\mb R^n$ of positive measure and $\psi$ is a real valued measurable function defined on $E$. We fix a dyadic cube  $Q$ and define the localized maximal operator $\mc M'_d\phi$ as in (\ref{eq:1p1}) but with the dyadic cubes $S$ being assumed to be contained in $Q$. Then for every $p>1$ we let
\begin{equation} \label{eq:1p4}
B_p(f,F)=\sup\left\{ \frac{1}{|Q|} \int_Q (\mc M'_d\phi)^p: \Av_Q(\phi)=f,\ \Av_Q(\phi^p)=F \right\},
\end{equation}
where $\phi$ is nonnegative in $L^p(Q)$ and the variables $f, F$ satisfy $0<f^p\leq F$. By a scaling argument it is easy to see that (\ref{eq:1p4}) is independent of the choice of $Q$ (so we may choose
$Q$ to be the unit cube $[0,1]^n$).
In \cite{M}, the function (\ref{eq:1p4}) has been precisely identified for the first time. The proof has been given in a much more general setting of tree-like structures on probability spaces.

More precisely we consider a non-atomic probability space $(X,\mu)$ and let $\mc T$ be a family of measurable subsets of $X$, that has a tree-like structure similar to the one in the dyadic case (the exact definition will be given in Section \ref{sec:2}).
Then we define the dyadic maximal operator associated to $\mc T$, by
\begin{equation} \label{eq:1p5}
\mc M_{\mc T}\phi(x) = \sup \left\{ \frac{1}{\mu(I)} \int_I |\phi|\,\mr \; d\mu: x\in I\in \mc T \right\},
\end{equation}
for every $\phi\in L^1(X,\mu)$, $x\in X$.

This operator is related to the theory of martingales and satisfies essentially the same inequalities as $\mc M_d$ does. Now we define the corresponding Bellman function of four variables (two of which are integral) of $\mc M_{\mc T}$, by
\begin{multline} \label{eq:1p6}
B_p^{\mc T}(f,F,L,k) = \sup \left\{ \int_K \left[ \max(\mc M_{\mc T}\phi, L)\right]^p\mr \; d\mu: \phi\geq 0, \int_X\phi\,\mr \; d\mu=f, \right. \\  \left. \int_X\phi^p\,\mr \; d\mu = F,\ K\subseteq X\ \text{measurable with}\ \mu(K)=k\right\},
\end{multline}
the variables $f, F, L, k$ satisfying $0<f^p\leq F $, $L\geq f$, $k\in (0,1]$.
The exact evaluation of (\ref{eq:1p6}) is given in \cite{M}, for the cases where $k=1$ or $L=f$. In the first case the author (in \cite{M}) precisely identifies the function $B_p^{\mc T}(f,F,L,1)$ by evaluating it in a first stage for the case where $L=f$. That is he precisely identifies $B_p^{\mc T}(f,F,f,1)$ (in fact $B_p^{\mc T}(f,F,f,1)=F \omega_p (\frac{f^p}{F})^p$, where                         $\omega_p: [0,1] \to [1,\frac{p}{p-1}]$ is the inverse function $H^{-1}_p$, of $H_p(z) = -(p-1)z^p + pz^{p-1}$). 

The proof of the above mentioned evaluation relies on a one-parameter integral inequality which is proved by arguments based on a linearization of the dyadic maximal operator. More precisely the author in \cite{M} proves that the inequality

\begin{equation}\label{eq:1p7}
F\geq \frac{1}{(\beta+1)^{p-1}} f^p + \frac{(p-1)\beta}{(\beta+1)^p} \int_X (M_{\mathcal{T}}\phi)^p \; d\mu,
\end{equation}
is true for every non-negative value of the parameter $\beta$ and sharp for one that depends on $f$, $F$ and $p$, namely for $\beta=\omega_p (\frac{f^p}{F})-1$. This gives as a consequence an upper bound for $B_p^{\mc T}(f,F,f,1)$, which after several technical considerations is proved to be best possible.Then  by using several calculus arguments the author in \cite{M} provides the evaluation of $B_p^{\mc T}(f,F,L,1)$ for every $L\geq f$. 

Now in \cite{NM} the authors give a direct proof of the evaluation of $B_p^{\mc T}(f,F,L,1)$ by using alternative methods. In fact they prove a sharp symmetrization principle that holds for the dyadic maximal operator, which is stated as Theorem 2.1 (see Section \ref{sec:2}).

In the second case, where $L=f$, the author (in \cite{M}) uses the evaluation of $B_p^{\mc T}(f,F,f,1)$ and provides the evaluation of the more general $B_p^{\mc T}(f,F,f,k)$, $k\in (0,1]$.

In this paper, our intention is to prove a three-parameter inequality (two of the parameters are nonnegative real numbers ordered in a specific way and the third parameter varies as an arbitrary measurable subset of the non-atomic probability space $(X,\mu)$) which generalizes and strengthens (\ref{eq:1p7}). Our aim is to use this inequality in order to study the following Bellman function problem (of three integral variables)

\begin{multline} \label{eq:1p8}
B_{p,q}^{\mc T}(f,A,F) = \sup \left\{ \int_X \left(\mc M_{\mc T}\phi\right)^p\mr \; d\mu: \phi\geq 0, \int_X\phi\,\mr \; d\mu=f, \right. \\  \left. \int_X\phi^q\,\mr \; d\mu = A,\ \int_X\phi^p\,\mr \; d\mu = F\right\},
\end{multline}
where $1<q<p$, and the variables $f,A,F$ lie in the domain of definition of the above problem. More precisely we prove an inequality that connects the $L^p-$integral of $\phi$ on $X$ and $K$, and also the $L^p-$integral of $M_{\mathcal{T}}\phi$, on $X$ and $K$, where $K$ is an arbitrary measurable subset of $X$. That is we prove the following

\begin{theorem}\label{thm:3p1}
Let $\beta\geq\gamma\geq 0$, and $K$ an arbitrary measurable subset of $X$, with measure $k\in (0,1]$. Then for every $\phi\in L^p(X,\mu)$ such that $\int_X \phi \; d\mu = f$ and $\int_X \phi^p \; d\mu = F$ the following inequality is true
\begin{multline} \label{eq:1p9}
F\geq \left[ 1-\frac{1}{(1+\gamma)^{p-1}} \right] \int_K \phi^p \; d\mu + \frac{(p-1)\beta}{(\beta+1)^p} \int_X (M_{\mathcal{T}}\phi)^p \; d\mu \\ - \frac{(p-1)\gamma}{(\beta+1)^p} \int_K (M_{\mathcal{T}}\phi)^p \; d\mu + \frac{f^p}{(\beta+1)^{p-1}}.
\end{multline}
\end{theorem}

Note that if we set $\gamma=0$ in (\ref{eq:1p9}) we get (\ref{eq:1p7}). Obviously, since (\ref{eq:1p9}) refines (\ref{eq:1p7}), we obtain that it is sharp. It is interesting to search for other forms of sharpness of (\ref{eq:1p9}), connected with the additional variables $\gamma$ and $K$ and by this way we wish to provide connections with the problem (\ref{eq:1p8}). These results are given in Section 6.  

In this paper, as in our previous ones we use combinatorial techniques as a mean to get in deeper understanding of the corresponding maximal operators. The outcome of this approach is the proof of the inequality (\ref{eq:1p9}). Moreover we prove sharpness of this inequality which gives us a motivation to reach to a lower bound for the Bellman function (\ref{eq:1p8}), in a subdomain of its domain of definition, more precisely when $f,A,F$, satisfy $A \geq \frac{f^q}{H_q(\omega_p(\frac{f^p}{F}))}$. Additionally this lower bound agrees with the function $B_{p,q}^{\mc T}(f,A,F)$, when $f,A,F$, satisfy $A= \frac{f^q}{H_q(\omega_p(\frac{f^p}{F}))}$ according to the results of \cite{DN}, and so the lower bound that we find for the Bellman function appears to be sharp as $A\rightarrow (\frac{f^q}{H_q(\omega_p(\frac{f^p}{F}))})^+ $. In the rest domain of definition of (\ref{eq:1p8}), that is when $A < \frac{f^q}{H_q(\omega_p(\frac{f^p}{F}))}$ we find a lower bound for the Bellman function (see Section 7), that also appears to be sharp as $A\rightarrow (\frac{f^q}{H_q(\omega_p(\frac{f^p}{F}))})^- $, according to the results in \cite{DN}, \cite{N5} and\cite{NM} and the proof that is presented in the last section.   \\

\bigskip

\section{Preliminaries} \label{sec:2}
Let $(X,\mu)$ be a nonatomic probability space. We give the following

\begin{definition} \label{def:2p1}
A set $\mc T$ of measurable subsets of $X$ will be called a tree if the following conditions are satisfied:
\begin{enumerate}[i)]
\item $X\in\mc T$ and for every $I\in\mc T$ we have that $\mu(I) > 0$.
\item For every $I\in\mc T$ there corresponds a finite or countable subset $C(I) \subseteq \mc T$ containing at least two elements such that
\vspace{-5pt}
\begin{enumerate}[a)]
\item the elements of $C(I)$ are pairwise disjoint subsets of $I$.
\item $I = \cup\, C(I)$.
\end{enumerate}
\item $\mc T = \cup_{\substack{ m\geq 0}} \mc T_{(m)}$, where $\mc T_{(0)} = \left\{ X \right\}$ and $\mc T_{(m+1)} = \cup_{I\in \mc T_{(m)}} C(I)$.
\item We have $\lim_{m\to\infty} \left( \sup_{I\in \mc T_{(m)}} \mu(I) \right)= 0 $
\item The tree $\mc T$ differentiates $L^1(X,\mu)$. That is for every $\phi \in L^1(X,\mu)$ it is true that \[ \lim_{\substack{x\in I \in \mc T \\ \mu (I)\to 0}} \frac{1}{\mu (I)}\int_I \phi \; d\mu = \phi(x),\] for $\mu-$almost every $x\in X$.
\end{enumerate}
\end{definition}

Then we define the dyadic maximal operator corresponding to $\mc T$ by
\begin{equation}\label{eq:2p1}
M_{\mc T}\phi (x)=sup \left\lbrace \frac{1}{\mu(I)} \int_I \mid \phi \mid \; d\mu \; : x\in I\in \mc T  \right\rbrace,
\end{equation}
for every $\phi \in L^1(X,\mu)$, $x\in X$.

We now give the following

\begin{definition}
Let $\phi: (X,\mu)\longrightarrow \mathbb{R}^{+}$. Then $\phi^{*}:(0,1]\longrightarrow \mathbb{R}^{+}$ is defined as the unique non-increasing, left continuous and equimeasurable to $\phi$ function on $(0,1]$.
\end{definition}

There are several formulas that express $\phi^{*}$, in terms of $\phi$.
One of them is as follows:
\[
\phi^{*}(t)=\inf \left( \left\lbrace y>0: \mu \left( \left\lbrace x\in X: \phi(x)>y\right\rbrace\right)<t \right\rbrace \right),
\]
for every $t\in (0,1]$. An equivalent formulation of the non increasing rearrangement can be given by
\[
\phi^{*}(t)=\sup_{e\subseteq X, \\  \mu(e)\geq t} \left[ \inf_{x\in e} \phi (x) \right],
\]
for any $t\in (0,1]$.

In \cite{NM} one can see the following symmetrization principle for the dyadic maximal operator $M_{\mc T}$.

\begin{theorem}\label{thm:2p1}
Let $g:(0,1]\longrightarrow \mathbb{R}^{+}$ be non-increasing and $G_1,G_2$ be non-decreasing and non-negative functions defined on $[0,+\infty)$. Then the following identity is true, for any $k\in (0,1]$
\begin{multline*}
\sup \left\lbrace \int_K G_1(M_{\mc T}\phi)\,G_2(\phi)\; d\mu: \phi^{*}=g \text{ and } \mu(K)=k  \right\rbrace=\\
= \int_0^k G_1\left( \frac{1}{t}\int_0^t g \right) G_2\left(g(t)\right) \; dt.
\end{multline*}
\end{theorem}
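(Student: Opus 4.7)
My plan is to establish the equality by proving the two inequalities separately: the ``$\leq$'' direction via rearrangement inequalities, and the ``$\geq$'' direction via an explicit tree-based construction using Lemma \ref{lem:2p2}. For the upper bound I would combine two standard ingredients. The first is a bilinear Hardy--Littlewood inequality: for nonnegative measurable $u,v$ on $(X,\mu)$ and any measurable $K\subseteq X$ with $\mu(K)=k$,
\[
\int_K uv\,d\mu \;\leq\; \int_0^{k} u^{*}(t)\,v^{*}(t)\,dt.
\]
This follows from the layer-cake representation together with Fubini: the left-hand side equals $\iint \mu(K\cap\{u>s\}\cap\{v>r\})\,ds\,dr$, and each integrand is bounded by $\min(\mu(K),\mu(u>s),\mu(v>r))$, which is precisely $|\{t\in(0,k]:u^{*}(t)>s,\,v^{*}(t)>r\}|$. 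The second ingredient is the rearrangement bound $(M_{\mc T}\phi)^{*}(t)\leq \frac{1}{t}\int_0^{t}\phi^{*}(s)\,ds$, obtained from the weak-type $(1,1)$ inequality for $M_{\mc T}$ combined with $\int_E \phi\leq\int_0^{\mu(E)}\phi^{*}$. Applying the bilinear inequality with $u=G_1(M_{\mc T}\phi)$, $v=G_2(\phi)$ and using $(G_i\circ w)^{*}=G_i\circ w^{*}$ for non-decreasing non-negative $G_i$ (plus $\phi^{*}=g$ and monotonicity of $G_1$) yields the target upper bound.

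For the ``$\geq$'' direction I would construct a maximizing sequence $(\phi_n,K_n)$. Fix $n$, set $s_j=1-j/n$ for $j=0,\dots,n$, and starting from $\mc F_0=\{X\}$ apply Lemma \ref{lem:2p2} inductively: given $\mc F_{j-1}\subseteq\mc T$ pairwise disjoint with $A_{j-1}:=\bigsqcup\mc F_{j-1}$ of measure $s_{j-1}$, apply the lemma to every $I\in\mc F_{j-1}$ with the common ratio $\alpha_j=1-s_j/s_{j-1}$ to produce $\mc F_j$ with $A_j:=\bigsqcup\mc F_j$ of measure $s_j$. The crucial ``fractal'' consequence is that every $I\in\mc F_{j-1}$ is subdivided in exactly the same proportion, so for $m\geq j$ and $D_m:=A_{m-1}\setminus A_m$ one has $\mu(I\cap D_m)=\frac{s_{m-1}-s_m}{s_{j-1}}\mu(I)$. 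On each shell $D_m$ I define $\phi_n$ via a measure-preserving map $\psi_m:D_m\to(s_m,s_{m-1}]$ that sends each $I\cap D_m$ to a sub-interval of length $\mu(I\cap D_m)$ (all sub-intervals packed in order), setting $\phi_n=g\circ\psi_m$; this yields $\phi_n^{*}=g$. As $n\to\infty$ the shells become thin, so $\int_{\psi_m(I\cap D_m)}g\approx g(s_{m-1})\,\mu(I\cap D_m)$, whence
\[
\frac{1}{\mu(I)}\int_I\phi_n \;\longrightarrow\; \frac{1}{s_{j-1}}\int_0^{s_{j-1}}g(t)\,dt.
\]
Taking $K_n$ to be the innermost $\lfloor nk\rfloor$ shells (so $\mu(K_n)\to k$) and using $M_{\mc T}\phi_n(x)\geq\frac{1}{\mu(I)}\int_I\phi_n$ for $x\in I\in\mc F_{j-1}$, the integral $\int_{K_n}G_1(M_{\mc T}\phi_n)G_2(\phi_n)\,d\mu$ dominates a Riemann sum converging to $\int_0^{k}G_1(\tfrac{1}{t}\int_0^{t}g)G_2(g(t))\,dt$.

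The main obstacle is the ``$\geq$'' direction: carrying out the proportional (``fractal'') tree subdivision via Lemma \ref{lem:2p2} and choosing the measure-preserving maps $\psi_m$ so that the local averages $\frac{1}{\mu(I)}\int_I\phi_n$ converge, uniformly over $I\in\mc F_{j-1}$, to the global target $\frac{1}{s_{j-1}}\int_0^{s_{j-1}}g$ — rather than merely in some averaged sense over $I$. Passing to the limit then requires a dominated-convergence argument using the non-negativity and monotonicity of $G_1,G_2$ together with the rearrangement bound for $M_{\mc T}$ to control tails and interchange sum with limit.
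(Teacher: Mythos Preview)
The paper does not give its own proof of Theorem~\ref{thm:2p1}; it is quoted as a known symmetrization principle from \cite{21} (and its proof is referred to again in Section~\ref{sec:6} via \cite{20}), so there is no in-paper argument to compare against directly. That said, your proposal is precisely the standard proof behind the cited result and is essentially correct.

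For the upper bound, your layer-cake justification of the bilinear Hardy--Littlewood bound $\int_K uv\,d\mu\le\int_0^k u^{*}v^{*}$ is valid, and the rearrangement estimate $(M_{\mc T}\phi)^{*}(t)\le \frac1t\int_0^t\phi^{*}$ is exactly Lemma~\ref{lem:3p1} of the paper, whose second proof is your weak-type argument verbatim. Combining these with $(G_i\circ w)^{*}=G_i\circ w^{*}$ gives the ``$\le$'' direction cleanly.

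For the lower bound, the proportional (``fractal'') subdivision built from Lemma~\ref{lem:2p2} and the shell-wise transplantation of $g$ via measure-preserving maps is exactly the construction alluded to in Section~\ref{sec:6} (``by the proof of Theorem~\ref{thm:2p1}, see \cite{20}''). Two small points deserve to be made precise. First, your pointwise lower bound $\text{Av}_I(\phi_n)\ge \frac{1}{s_{m-1}}\sum_{l\ge m} g(s_{l-1})\cdot\frac1n$ is a \emph{lower} Riemann sum for $\frac{1}{s_{m-1}}\int_0^{s_{m-1}}g$; its convergence when $g(0^+)=+\infty$ follows from integrability and monotonicity of $g$ (for every $\varepsilon>0$ pick $\delta$ with $\int_0^{\delta}g<\varepsilon$ to control the tail, then use that on $[\delta,1]$ the monotone $g$ is bounded so the Riemann sums converge). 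Second, since the supremum is over $\mu(K)=k$ exactly, either arrange the partition so that some $s_j=k$, or use nonatomicity to trim/augment $K_n$ by a set of small measure; the monotonicity of $G_1,G_2$ and the upper bound already obtained make this perturbation harmless.
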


By using Theorem 2.1 one immediately can see that for the evaluation of  $B_{p,q}^{\mc T}(f,A,F)$, it is enough to find the following function (related to the Hardy transform) of three integral variables :

\begin{multline}
C_{p,q}(f,A,F) = \sup \left\{ \int_0^1 \left(\frac{1}{t}\int_0^t h\right)^p: h\geq 0, \int_0^1 h=f, \right. \\  \left. \int_0^1 h^q = A,\ \int_0^1 h^p = F\right\},
\end{multline} 
where the function $h$ varies on decreasing functions on $(0.1]$ satisfying the above mentioned conditions.

\section{A multiparameter inequality for $M_{\mc T}$} \label{sec:3}
We begin by describing a linearization of the dyadic maximal operator, as it was introduced in \cite{M}. First we give the notion of the $\mc T$-good function. Let $\phi \in L^1(X,\mu)$ be a non-negative function and for any $I\in \mc T$, set $\Av_I(\phi) = \frac{1}{\mu(I)} \int_I \phi\,\mr d\mu$. We will say that $\phi$ is $\mc T$-good, if the set
\[
\mc A_\phi = \left\{ x\in X: \mc M_{\mc T}\phi(x) > \Av_I(\phi)\ \text{for all}\ I\in \mc T\ \text{such that}\ x\in I \right\}
\]
has $\mu$-measure zero.

For example one can define, for any $m\geq 0$, and $\lambda_I\geq 0$ for each $I\in \mc T_{(m)}$ (the $m$-level of the tree $\mc T$), the following function
\[
\phi=\sum_{I\in \mc T_{(m)}} \lambda_I \chi_I,
\]
where $\chi_I$ denotes the characteristic function of $I$. It is an easy matter to show that $\phi$ is $\mc T$-good.

Suppose that we are given a $\mc T$-good function $\phi$. For any $x\in X\setminus\mc A_\phi$ (that is for $\mu$-almost all $x\in X$), we denote by $I_{\phi}(x)$ the largest element in the non empty set
\[
\big\{I\in \mc T: x\in I\ \text{and}\ \mc M_{\mc T}\phi(x) = \Av_I(\phi)\big\}.
\]

We also define for any $I\in \mc T$
\begin{align*}
A(\phi,I) &= \big\{ x\in X\setminus\mc A_\phi: I_\phi(x)=I \big\},\ \text{and we set} \\[2pt]
S_\phi &= \big\{ I\in\mc T: \mu\left(A(\phi,I)\right) > 0 \big\} \cup \big\{X\big\}.
\end{align*}
It is obvious that $\mc M_{\mc T}\phi = \sum_{I\in S_\phi} \Av_I(\phi) \chi_{A(\phi,I)}$, $\mu$-almost everywhere.

We also define the following correspondence $I \to I^\star$ with respect to $S_{\phi}$ : $I^\star$ is the smallest element of $\{ J\in S_\phi: I\subsetneq J \}$.
This is defined for every $I\in S_\phi$ except $X$. It is clear that the family of sets $\left\lbrace A(\phi,I): I\in S_{\phi}\right\rbrace$ consists of pairwise disjoint sets and it's union has full measure on $X$, since $\mu\!\left( \cup_{J\notin S_\phi} A(\phi,J) \right)=0$.

We give without proof a lemma (appearing in \cite{M}) which describes the properties of the class $S_{\phi}$, and those of the sets $A(\phi,I)$, $I\in S_{\phi}$.

\begin{lemma} \label{lem:3p1}
\begin{enumerate}[i)]
\item If $I, J\in S_\phi$ then either $A(\phi,J)\cap I = \emptyset$ or $J\subseteq I$.
\item If $I\in S_\phi$, then there exists $J\in C(I)$ such that $J\notin S_\phi$.
\item For every $I\in S_\phi$ we have that
\[
I \approx \underset{\substack{S_{\phi}\ni J \subseteq I\ \,}}{\bigcup} A(\phi,J).
\]
\item For every $I\in S_\phi$ we have that
\begin{gather*}
A(\phi,I) = I\setminus \underset{\substack{J\in S_\phi : J^\star = I\ }}{\bigcup} J,\ \ \text{and thus} \\
\mu(A(\phi,I)) = \mu(I) - \sum_{\substack{J\in S_\phi: J^\star=I\ }} \mu(J).
\end{gather*}
\end{enumerate}
\end{lemma}

Here by writing $A\approx B$, we mean that $A,B$ are measurable subsets of $X$ such that $\mu(A\setminus B)=\mu(B\setminus A)=0$.

From the above lemma we immediately get that
\[
\Av_I(\phi)=\frac{1}{\mu (I)} \sum_{\substack{J\in S_\phi: J\subseteq I\ }} \int_{A(\phi,J)} \phi \; d\mu,
\]
for any $I\in S_{\phi}$. We are now in position to prove Theorem 1.1, that is the validity of (\ref{eq:1p9}).

\begin{proof}
We begin by considering a $\mc T$-good function $\phi$, satisfying $\int_X \phi \; d\mu =f$ and $\int_X \phi^p \; d\mu =F$. Let $K$ be a measurable subset of $X$, with $\mu(K)=k\in (0,1]$ and $\beta,\gamma$ such that $\beta>\gamma>0$.

By Lemma \ref{lem:3p1} we get that $F= \int_X \phi^p \; d\mu = \sum_{\substack{I\in S_\phi}} \int_{A_I}\phi^p$, where we write $A_I$ for the set $A(\phi,I), I\in S_{\phi}$. We split the set $A_I$ in two measurable subsets $B_I,\Gamma_I$ for any $I\in S_{\phi}$, where $\mu \left( B_I \right), \mu \left( \Gamma_I \right)>0$. The choice of $B_I,\Gamma_I$ will be given in the sequel. Write $\mu (A_I)=a_I$, for $I\in S_{\phi}$. For any $I\in S_{\phi}$ we search for a constant $\tau_I>0$ for which

\begin{equation}\label{eq:3p1}
\mu (I) \, \tau_I-(\beta +1)\, \sum_{\substack{J\in S_\phi \\ J^\star=I\ }} \mu(J)- (\gamma +1)\mu(B_I)= \mu (\Gamma_I),
\end{equation}

Then (\ref{eq:3p1}) in view of Lemma \ref{lem:3p1} is equivalent to
\begin{multline} \label{eq:3p2}
\mu (I) \, \tau_I-(\beta +1)\,\left( \mu(I)-\mu(A_I) \right)- (\gamma +1)\mu(B_I)= \mu (\Gamma_I)\Leftrightarrow \\
\left[ \tau_I - (\beta +1) \right]\mu (I)+ (\beta +1)\mu(B_I)+(\beta +1)\mu(\Gamma_I)-(\gamma +1)\mu(B_I)=\mu(\Gamma_I)\Leftrightarrow \\
\left[ \tau_I - (\beta +1) \right]\mu (I)+ \beta\mu(\Gamma_I)=(\gamma-\beta)\mu(B_I),
\end{multline}
We let $\mu(\Gamma_I)=k_I\,a_I$, for some $k_I\in (0,1)$, so $\mu(B_I)=(1-k_I)a_I$. Thus (\ref{eq:3p2}) becomes
\begin{multline} \label{eq:3p3}
\left[ \tau_I - (\beta +1) \right]\mu (I)=(\gamma-\beta)(1-k_I)a_I-\beta k_I\,a_I\Leftrightarrow \\
\left[ \tau_I - (\beta +1) \right]\mu (I)= \gamma(1-k_I)a_I-\beta a_I,
\end{multline}

We now set $p_I=\frac{a_I}{\mu(I)}$, for any $I\in S_{\phi}$.
Thus (\ref{eq:3p3}) gives
\begin{multline} \label{eq:3p4}
\tau_I - (\beta +1)=\gamma(1-k_I)p_I-\beta p_I\Leftrightarrow \\
\tau_I=\left( (\beta +1)-\beta p_I \right) + (1-k_I)\gamma p_I,
\end{multline}

Note that this choice of $\tau_I, I\in S_{\phi}$, immediately gives $\tau_I>0$, since $\beta>\gamma>0$ and $0<p_I\leq 1$ for any $I\in S_{\phi}$.

We write now
\begin{multline} \label{eq:3p5}
F=\sum_{\substack{I\in S_\phi}} \int_{A_I}\phi^p\; d\mu= \sum_{\substack{I\in S_\phi}} \int_{B_I}\phi^p\; d\mu+ \sum_{\substack{I\in S_\phi}} \int_{\Gamma_I} \phi^p \; d\mu \geq \\
\geq \sum_{\substack{I\in S_\phi}} \frac{\left(\int_{B_I}\phi\; d\mu\right)^p}{\mu(B_I)^{p-1}}+\sum_{\substack{I\in S_\phi}} \frac{\left(\int_{\Gamma_I}\phi\; d\mu\right)^p}{\mu(\Gamma_I)^{p-1}},
\end{multline}
in view of H{\"o}lder's inequality. We denote the first and the second sum on the right of (\ref{eq:3p5}) by $\Sigma_1$, $\Sigma_2$ respectively.
Then by (\ref{eq:3p1}) and Lemma 3.1 iv) we obtain the following
\begin{multline} \label{eq:3p6}
\Sigma_2= \sum_{\substack{I\in S_\phi}}\frac{1}{\mu(\Gamma_I)^{p-1}}\left( \int_I \phi\; d\mu - \sum_{\substack{J\in S_\phi \\ J^\star=I\ }} \int_J \phi \; d\mu - \int_{B_I}\phi\;d\mu \right)^p=\\
=\sum_{\substack{I\in S_\phi}} \frac{\left( \mu(I)y_I-\sum_{\substack{J\in S_\phi \\ J^\star=I }} \mu(J)y_J-\int_{B_I}\phi\; d\mu \right)^p}{\left( \tau_I \mu(I)-(\beta +1)\sum_{\substack{J\in S_\phi \\ J^\star=I }}\mu(J)-(\gamma+1)\mu(B_I)\right)^{p-1}},
\end{multline}
where $y_I=Av_{I}(\phi)$, for every $I\in S_{\phi}$.
Now because of H{\"o}lder's inequality in the form
\begin{equation} \label{eq:3p7}
\frac{\left(\lambda_1+\lambda_2+...+\lambda_{\nu} \right)^p}{\left( \mu_1+\mu_2+....+\mu_{\nu} \right)^{p-1}}\leq \frac{\lambda_1^p}{\mu_1^{p-1}}+\frac{\lambda_2^p}{\mu_2^{p-1}}+...+ \frac{\lambda_{\nu}^p}{\mu_{\nu}^{p-1}},
\end{equation}
where $p>1,\mu_i>0$ and $\lambda_i\geq 0$, for $i=1,2,...,\nu$, we have in view of (\ref{eq:3p6}) that:
\begin{multline} \label{eq:3p8}
\Sigma_2\geq \sum_{\substack{I\in S_\phi}}\frac{\left( \mu(I)y_I \right)^p}{\left( \tau_I \mu(I) \right)^{p-1}}- \sum_{\substack{I\in S_\phi}}\sum_{\substack{J\in S_\phi \\ J^\star=I\ }}\frac{\left( \mu(J)y_J \right)^p}{\left( (\beta +1)\mu(J) \right)^{p-1}}-\\
-\sum_{\substack{I\in S_\phi}}\frac{1}{(\gamma+1)^{p-1}} \frac{\left( \int_{B_I}\phi\; d\mu \right)^p}{\mu(B_I)^{p-1}}.
\end{multline}
By (\ref{eq:3p8}) we obtain
\begin{multline} \label{eq:3p9}
\Sigma_1+\Sigma_2\geq \left(1-\frac{1}{(1+\gamma)^{p-1}} \right)\Sigma_1+\sum_{\substack{I\in S_\phi}}\mu(I) \frac{y_I^p}{\tau_I^{p-1}}-\sum_{\substack{I\in S_\phi \\ I\neq X}} \mu(I) \frac{y_I^p}{(\beta+1)^{p-1}}=\\
=\left(1-\frac{1}{(1+\gamma)^{p-1}} \right)\Sigma_1+\frac{y_X^p}{\tau_X^{p-1}}+\sum_{\substack{I\in S_\phi \\ I\neq X}}\mu(I) y_I^p \left( \frac{1}{\tau_I^{p-1}}-\frac{1}{(\beta+1)^{p-1}} \right)=\\
=\left(1-\frac{1}{(1+\gamma)^{p-1}} \right)\Sigma_1+\frac{f^p}{\tau_X^{p-1}}+\\
+\sum_{\substack{I\in S_\phi \\ I\neq X}} \frac{a_I}{p_I} \left( \frac{1}{\left((\beta +1-\beta p_I)+(1-k_I)\gamma p_I\right)^{p-1}} - \frac{1}{(\beta +1)^{p-1}} \right) y_I^p.
\end{multline}
Note that in (\ref{eq:3p9}) we have used the properties of the correspondence $I\longrightarrow I^{\star}$, on $S_{\phi}$.

We denote now $\Sigma_3$ the sum on the right of (\ref{eq:3p9}).
Then
\begin{equation}\label{eq:3p10}
\Sigma_3 \geq \sum_{\substack{I\in S_\phi \\ I\neq X}} \frac{1}{p_I}\left[ \frac{\beta p_I-(1-k_I)\gamma p_I}{(\beta +1)^p} (p-1) \right]a_I y_I^p,
\end{equation}
because of the inequality
\begin{equation}\label{eq:3p11}
\frac{1}{\left( (\beta +1)-s \right)^{p-1}}-\frac{1}{(\beta +1)^{p-1}}\geq\frac{(p-1)s}{(\beta +1)^p},
\end{equation}
which is true for any $\beta>0$, and $s\in[0,\beta]$, by the mean value theorem on derivatives. Note that since $\beta>\gamma>0$, we have that the quantity $s=\beta p_I-(1-k_I)\gamma p_I$ is positive and less than $\beta$ so (\ref{eq:3p11}) applies in $\Sigma_3$ , and gives (\ref{eq:3p10}). Thus
\begin{multline} \label{eq:3p12}
\Sigma_3\geq (p-1)\sum_{\substack{I\in S_\phi \\ I\neq X}} \frac{\beta -\gamma}{(\beta +1)^{p}}a_I y_I^p+ (p-1)\sum_{\substack{I\in S_\phi \\ I\neq X}}\frac{k_I \gamma}{(\beta+1)^p}a_I y_I^p=\\
=(p-1)\frac{\beta-\gamma}{(\beta +1)^p}\sum_{\substack{I\in S_\phi }}a_I y_I^p -(p-1)\frac{\beta- \gamma}{(\beta +1)^p}a_Xy_X^p+\\
+ (p-1)\frac{\gamma}{(\beta +1)^p}\sum_{\substack{I\in S_\phi }}k_I a_I y_I^p-\frac{(p-1)\gamma}{(\beta +1)^p} k_Xa_X y_X^p=\\
=(p-1)\frac{\beta-\gamma}{(\beta +1)^p}\int_X \left(M_{\mc T}\phi \right)^p\; d\mu +\\
+ (p-1)\frac{\gamma}{(\beta +1)^p}\int_{\Gamma} \left(M_{\mc T}\phi \right)^p\; d\mu-\frac{(p-1)}{(\beta +1)^p} \left( (\beta-\gamma)a_X+\gamma k_X a_X \right) f^p,
\end{multline}
where we have set $\Gamma=\bigcup_{\substack{I\in S_\phi}} \Gamma_I$.

By (\ref{eq:3p9}) and (\ref{eq:3p12}) we get
\begin{multline} \label{eq:3p13}
\Sigma_1+\Sigma_2\geq \left(1-\frac{1}{(1+\gamma)^{p-1}} \right)\Sigma_1+ (p-1)\frac{\beta-\gamma}{(\beta +1)^p}\int_X \left(M_{\mc T}\phi \right)^p\; d\mu +\\
+ (p-1)\frac{\gamma}{(\beta +1)^p}\int_{\Gamma} \left(M_{\mc T}\phi \right)^p\; d\mu + \lambda_4,
\end{multline}
where
\begin{equation}\label{eq:3p14}
\lambda_4= \frac{f^p}{\tau_X^{p-1}}-\frac{(p-1)}{(\beta +1)^p}\left( (\beta - \gamma)a_X+\gamma k_X a_X \right)f^p.
\end{equation}
By definition of $\tau_X$, (\ref{eq:3p14}) gives
\begin{multline} \label{eq:3p15}
\lambda_4= f^p \left[\frac{1}{\left((\beta+1)-\beta p_X+ (1-k_X)\gamma p_X \right)^{p-1}}-(p-1)\frac{(\beta-\gamma)a_X+\gamma a_X k_X}{(\beta +1)^p} \right]=\\
=f^p \left( \frac{1}{\left( (\beta+1)-\delta \right)^{p-1}} -(p-1) \frac{\delta}{(\beta +1)^p}\right),
\end{multline}
where $\delta=(\beta-\gamma)a_X+\gamma a_X k_X$(note that we used that $p_X=a_X$).

Now because of inequality (\ref{eq:3p11}) we have that
\[
\frac{1}{\left( (\beta+1)-s \right)^{p-1}} -(p-1) \frac{s}{(\beta +1)^p}\geq \frac{1}{(\beta +1)^{p-1}}, \forall s\in[0,\beta]
\]
and note that $\delta\in (0,\beta)$, by the definition of $\delta$.

So (\ref{eq:3p15}) gives $\lambda_4 \geq \frac{f^p}{(\beta+1)^{p-1}}$. Then, by (\ref{eq:3p13}) we have
\begin{multline} \label{eq:3p16}
\Sigma_1+\Sigma_2 \geq \left( 1-\frac{1}{(1+\gamma)^{p-1}} \right)\Sigma_1+(p-1)\frac{\beta-\gamma}{(\beta+1)^p}\int_X \left( M_{\mc T}\phi \right)^p\;d\mu +\\
+ (p-1)\frac{\gamma}{(\beta+1)^p}\int_{\Gamma} \left( M_{\mc T}\phi \right)^p\;d\mu+\frac{f^p}{(\beta+1)^{p-1}}=\\
=\left( 1-\frac{1}{(1+\gamma)^{p-1}} \right)\Sigma_1+\\
+\frac{(p-1)}{(\beta+1)^p} \left[ \beta \int_X \left( M_{\mc T}\phi \right)^p\;d\mu -\gamma \int_B \left( M_{\mc T}\phi \right)^p\;d\mu \right] + \frac{f^p}{(\beta+1)^{p-1}},
\end{multline}
where $B=\bigcup_{\substack{I\in S_\phi}}B_I=X\setminus\Gamma$.

Now (\ref{eq:3p16}) gives
\begin{multline} \label{eq:3p17}
\Sigma_2+\frac{1}{(1+\gamma)^{p-1}}\Sigma_1\geq\\
\geq \frac{f^p}{(\beta+1)^{p-1}} + \frac{(p-1)}{(\beta+1)^p} \left[ \beta \int_X \left( M_{\mc T}\phi \right)^p\;d\mu -\gamma \int_B \left( M_{\mc T}\phi \right)^p\;d\mu \right] \Rightarrow\\
\frac{1}{(1+\gamma)^{p-1}} \left( \Sigma_1 + \Sigma_2 \right)+ \left( 1-\frac{1}{(1+\gamma)^{p-1}} \right)\Sigma_2\geq\\
\geq   \frac{f^p}{(\beta+1)^{p-1}}+ \frac{(p-1)}{(\beta+1)^p} \left[ \beta \int_X \left( M_{\mc T}\phi \right)^p\;d\mu -\gamma \int_B \left( M_{\mc T}\phi \right)^p\;d\mu \right].
\end{multline}
But $F\geq\Sigma_1+\Sigma_2$, and $\Sigma_2\leq \int_\Gamma \phi^p\;d\mu$, so that we conclude from (\ref{eq:3p17}) that
\begin{multline} \label{eq:3p18}
\frac{F}{(1+\gamma)^{p-1}} +  \left( 1-\frac{1}{(1+\gamma)^{p-1}} \right) \int_\Gamma \phi^p\;d\mu \geq \frac{f^p}{(\beta+1)^{p-1}}+\\
\frac{(p-1)}{(\beta+1)^p} \left[ \beta \int_X \left( M_{\mc T}\phi \right)^p\;d\mu -\gamma \int_B \left( M_{\mc T}\phi \right)^p\;d\mu \right]
\end{multline}

Now from (\ref{eq:3p18}) we immediately get
\begin{multline} \label{eq:3p19}
\frac{1}{(1+\gamma)^{p-1}}\int_B \phi^p\; d\mu + \int_\Gamma \phi^p\;d\mu \geq\\ \geq \frac{f^p}{(\beta+1)^{p-1}}+\frac{(p-1)}{(\beta+1)^p} \left[ \beta \int_X \left( M_{\mc T}\phi \right)^p\;d\mu -\gamma \int_B \left( M_{\mc T}\phi \right)^p\;d\mu \right].
\end{multline}

But the left side of (\ref{eq:3p19}) equals $F-\left(1-\frac{1}{(1+\gamma)^{p-1}} \right) \int_B \phi^p \; d\mu$, so that (\ref{eq:3p19}) becomes
\begin{multline} \label{eq:3p20}
F\geq \left( 1-\frac{1}{(1+\gamma)^{p-1}} \right) \int_B\phi^p\;d\mu + \frac{f^p}{(\beta+1)^{p-1}} +\\
+ \frac{(p-1)\beta}{(\beta+1)^p} \int_X \left( M_{\mc T}\phi \right)^p\;d\mu - \frac{(p-1)\gamma}{(\beta+1)^p} \int_B \left( M_{\mc T}\phi \right)^p\;d\mu.
\end{multline}

Inequality (\ref{eq:3p20}) is in fact true for every choice of $B$, since every measurable subset $B$, of $X$ can be written as  $B=\bigcup_{\substack{I\in S_\phi}}B_I$, where $B_I=B\cap A_I$. Then setting $\Gamma_I=A_I\setminus B_I$ and following the above proof, we obtain the validity of (\ref{eq:3p20}). Theorem \ref{thm:3p1} is thus proved for any $\phi$ which is $\mc T$-good function (replace $B$ by $K$). Note that in the above proof we have used the fact that $\mu(B_I)>0$, for every $I\in S_{\phi}$, but this can be applied (by using the fact that $(X, \mu)$ is nonatomic) to prove (\ref{eq:3p20}) even if $\mu(B_I)=0$, for some $I\in S_{\phi}$. Now if $\phi\in L^p(X,\mu)$ is arbitrary, we consider the sequence $(\phi_m)_m$, where $\phi_m=\sum_{\substack{J\in \mc T_{(m)}}} Av_J(\phi)\, \chi_J$, and we set $\Phi_m=\sum_{\substack{J\in \mc T_{(m)}}} \max \left\lbrace Av_I(\phi): J\subseteq I \in \mc T \right\rbrace \chi_I $.

Then since $Av_J(\phi)=Av_I(\phi_m)$, for any $J\in \mc T$ for which $J\subseteq I \in \mc T_{(m)}$, we immediately see that $\Phi_m= M_{\mc T}\phi_m$.

Obviously $\int_X \phi_m \; d\mu=\int_X \phi \; d\mu=f $, and we can easily see that $F_m=\int_X \phi_m^p\;d\mu \leq \int_X \phi^p \;d\mu=F$. That is $\phi_m\in L^p (X,\mu), \forall m\in \mathbb{N}$.

Additionally $\Phi_m$ converges monotonically to $M_{\mc T}\phi$. Now $\phi_m$ is $\mc T$-good for any $m\in \mathbb{N}$, so that (\ref{eq:3p20}) is true, for $\phi_m$ , and for any $B\subseteq X$ measurable. Since $M_{\mc T}\phi_m $ increases to $ M_{\mc T}\phi$ on $X$, we get
\[ \lim_m \int_X \left(M_{\mc T}\phi_m \right)^p\;d\mu = \int_X \left( M_{\mc T}\phi\right)^p \; d\mu, \] and
\[ \lim_m \int_B \left( M_{\mc T}\phi_m \right)^p\; d\mu =\int_B \left( M_{\mc T}\phi \right)^p\;d\mu, \] while by the construction of $\phi_m$, and the fact that the tree $\mc T$ differentiates $L^1(X,\mu)$ we obtain that $\phi_m\longrightarrow \phi$, $\mu$-a.e on $X$. Now since $\phi_m\leq M_{\mc T}\phi_m\leq M_{\mc T}\phi$ and $M_{\mc T}\phi\in L^p(X,\mu)$(because $\phi \in L^p(X,\mu)$), we have, using the dominated convergence theorem that $\lim_m \int_X \phi_m^p=F$ and $\lim_m \int_B \phi_m^p=\int_B \phi^p \; d\mu$. From all these facts we deduce the validity of (\ref{eq:3p20}) for general $\phi\in L^p(X,\mu)$. For $\beta=\gamma>0$,(\ref{eq:3p20}) remains true by continuity reasons.
\end{proof}
\section{A first application of Theorem 1.1} \label{sec:4}

Let $h:(0,1]\longrightarrow \mathbb{R}^{+}$ be an arbitrary non-increasing function such that $\int_0^1 h=f$ and $\int_0^1 h^p=F$, where the variables $f$, $F$ satisfy $f^p<F$. Let $k\in (0,1]$ and fix a non atomic probability space $(X,\mu)$, equipped with a tree structure $\mc T$, such that $\mc T$ differentiates $L^p(X,\mu)$. By the proof of Theorem \ref{thm:2p1} (see \cite{NM}), we can construct a family $(\phi_{\alpha})_{\alpha\in (0,1]}$, of non-negative measurable functions defined on $(X,\mu)$, and a family $(K_{\alpha})_{\alpha\in (0,1]}$ of measurable subsets of $X$, such that the following hold: $\phi_{\alpha}^{\star}=h, \forall \alpha\in (0,1]$, $\lim_{\alpha\rightarrow 0^{+}}\int_{K_{\alpha}}\left(M_{\mc T}\phi_{\alpha} \right)^p\; d\mu=\int_0^k \left( \frac{1}{t}\int_0^t h \right)^p \;dt$, $\lim_{\alpha\rightarrow 0^{+}}\int_{K_{\alpha}} \phi_{\alpha}^p\;d\mu = \int_0^k h^p$ and $\lim_{\alpha\rightarrow 0^{+}} \mu(K_{\alpha})=k$.
If we apply the inequality (\ref{eq:1p9}), for $\phi_{\alpha}$ and $K_{\alpha}$, for any $\alpha\in (0,1]$, we get:

\begin{multline} \label{eq:4p1}
F \geq \left( 1-\frac{1}{(1+\gamma)^{p-1}} \right) \int_{K_{\alpha}} \phi_{\alpha}^p\;d\mu + \frac{f^p}{(\beta+1)^{p-1}}+\\
+\frac{(p-1)\beta}{(\beta+1)^p} \int_X \left( M_{\mc T}\phi_{\alpha} \right)^p\;d\mu - \frac{(p-1)\gamma}{(\beta+1)^p} \int_{K_{\alpha}} \left( M_{\mc T}\phi_{\alpha} \right)^p\;d\mu,
\end{multline}
for any $\beta\geq \gamma>0$.

Obviously $\int_X \phi_{\alpha}\; d\mu=f$ and $\int_X \phi_{\alpha}^p \; d\mu=F$, since $\phi_{\alpha}^{\star}=h, \forall \alpha\in (0,1]$.
Letting $\alpha\rightarrow 0^{+}$, we immediately see by (\ref{eq:4p1}) that
\begin{multline} \label{eq:4p2}
\frac{(p-1)\beta}{(\beta+1)^p} \int_0^1\left(\frac{1}{t} \int_0^t h \right)^p\; dt \leq \frac{(p-1)\gamma}{(\beta+1)^p}\int_0^k\left(\frac{1}{t} \int_0^t h \right)^p\; dt +F - \frac{f^p}{(\beta+1)^{p-1}}+\\
+ \left( \frac{1}{(1+\gamma)^{p-1}}-1 \right)\int_0^k h^p.
\end{multline}

Set now $\delta=\delta_k=\left( \frac{\int_0^k\left(\frac{1}{t}\int_0^t h\right)^p\; dt}{\int_0^k h^p} \right)^{\frac{1}{p}}$. Obviously $1\leq \delta< \frac{p}{p-1}$ and $\delta=1\Leftrightarrow h$ is constant on $(0,k]$. We assume that $\beta>\delta-1>0$.
We wish, for any such $\beta$, to minimize the right side of (\ref{eq:4p2}), with respect to $\gamma\in(0,\beta)$. 

For this purpose we define
\[
G_{\beta}(\gamma)= \frac{(p-1)\gamma}{(\beta+1)^p} \int_0^k\left(\frac{1}{t} \int_0^t h \right)^p\; dt + \frac{1}{(1+\gamma)^{p-1}}\int_0^k h^p,
\]
for $\gamma\in  (0,\beta]$. Note that
\[
G'_{\beta}(\gamma)= \frac{(p-1)}{(\beta+1)^p} \int_0^k\left(\frac{1}{t} \int_0^t h \right)^p\; dt - \frac{(p-1)}{(\gamma+1)^p}\int_0^k h^p.
\]
We note at this point that if we fix $0<\beta\leq\delta-1$, then by the calculations that follow, the right side of (\ref{eq:4p2}) is minimized when $\gamma=0$, thus producing the respective inequality to (\ref{eq:1p7}) for the Hardy transform operator (by applying Theorem 2.1), and this does not give us further information for the integral properties of the Hardy transform of $h$. Thus we choose to work on the range $\beta>\delta-1>0$.    

By the definition of $G_{\beta}$ we get: $G'_{\beta}(\gamma)=0 \Leftrightarrow \frac{\beta+1}{\gamma+1}=\delta \Leftrightarrow \gamma = \frac{\beta+1}{\delta}-1$.
Since $\beta > \delta-1$, if we set $\gamma_0=\frac{\beta+1}{\delta}-1$ we have that $\gamma_0\in (0,\beta)$. We easily get now that $\min \left\lbrace G_{\beta}(\gamma): \gamma\in (0,\beta]\right\rbrace = G_{\beta}(\gamma_0)$. Replacing the value $\gamma_0$ into (\ref{eq:4p2}) for any $\beta> \delta-1$, and using the definition of $\delta$ we get

\begin{multline} \label{eq:4p3}
\int_0^1\left(\frac{1}{t} \int_0^t h \right)^p\; dt \leq \frac{1}{\beta} \left( \frac{\beta+1}{\delta}-1\right)\delta^p\int_0^k h^p+ \frac{(\beta+1)^p}{(p-1)\beta}F- \frac{(\beta+1)}{(p-1)\beta}f^p+\\
+ \frac{(\beta+1)^p}{(p-1)\beta}\left( -1 +\left(\frac{\delta}{\beta+1} \right)^{p-1} \right)\int_0^k h^p,
\end{multline}
$\forall \beta>\delta-1$.

Now the right side of (\ref{eq:4p3}), equals
\begin{multline*}
\frac{(\beta+1)}{\beta(p-1)}\left( p\delta^{p-1} \int_0^k h^p -f^p \right)-\\
-\left(\frac{\beta+1}{\beta}-1 \right)\delta^p \int_0^k h^p+ \frac{(\beta+1)^p}{(p-1)\beta}\left(F-\int_0^k h^p \right)=\\
= \frac{(\beta+1)}{\beta(p-1)} \left(p\delta^{p-1}\int_0^k h^p- (p-1)\delta^p \int_0^k h^p-f^p \right)+\\
+\delta^p \int_0^k h^p + \frac{(\beta+1)^p}{(p-1)\beta}\int_k^1 h^p=\\
= \frac{(\beta+1)}{\beta(p-1)} \left(H_{p}(\delta) \int_0^k h^p -f^p\right)+ \delta^p \int_0^k h^p + \frac{(\beta+1)^p}{(p-1)\beta} \int_k^1 h^p=\\
=\int_0^k \left(\frac{1}{t} \int_0^t h \right)^p\; dt + \Lambda(\beta)
\end{multline*}
where
\begin{equation}\label{eq:4p4}
\Lambda(\beta)=\frac{(\beta+1)^p}{(p-1)\beta}\int_k^1 h^p + \frac{(\beta+1)}{(p-1)\beta} \left(H_{p}(\delta) \int_0^k h^p -f^p\right)
\end{equation}

Assume at this point that $\delta$ satisfies
\begin{equation} \label{eq:4p5}
\delta=\delta_k\leq \omega_p\left( \frac{f^p}{F}\right),
\end{equation}
while we also assume that $\int_0^k h^p<F$, that is $\int_k^1 h^p > 0$.
We wish to find the infimum value of $\Lambda(\beta)$, for $\beta>\delta-1$, when $\delta$ satisfies (\ref{eq:4p5}).

It is a simple matter to show that
\[
\Lambda'(\beta)= -\frac{H_p (\beta+1)}{(p-1)\beta^2}\int_k^1 h^p - \frac{1}{(p-1)\beta^2}\left(H_{p}(\delta) \int_0^k h^p -f^p\right).
\]

We solve now the equation $\Lambda'(\beta)=0 \Leftrightarrow$
\begin{equation}\label{eq:4p6}
H_p(\beta+1)=\frac{f^p-H_{p}(\delta) \int_0^k h^p }{\int_k^1 h^p}.
\end{equation}
Note that the right side of (\ref{eq:4p6}) is less or equal than $H_{p}(\delta)$, that is
\begin{equation} \label{eq:4p7}
H_p(\delta)\geq \frac{f^p-H_{p}(\delta) \int_0^k h^p }{\int_k^1 h^p}
\end{equation}

Indeed (\ref{eq:4p7}) is equivalent to $H_p(\delta)F\geq f^p \Leftrightarrow \delta \leq \omega_p \left(\frac{f^p}{F} \right)$, which is true in view of the assumption that we made on $\delta$.

Now $H_p$, defined on $[1,+\infty)$ satisfies the following:
$H_p(1)=1$, $H_p$ is strictly decreasing, and $\lim_{\substack{x\rightarrow +\infty}} H_p(x)=-\infty$.

Thus there exists a unique value $\beta_0 \geq\delta-1$ for which, we have equality in (\ref{eq:4p6}). That is
\begin{equation} \label{eq:4p8}
H_p(\beta_0+1)= \frac{f^p-H_{p}(\delta) \int_0^k h^p }{\int_k^1 h^p}.
\end{equation}

As is easily seen for this value of $\beta_0$ we have that $\inf_{\substack{\delta-1<\beta<+\infty}} \Lambda(\beta)=\Lambda(\beta_0)$, thus (\ref{eq:4p3}) and (\ref{eq:4p4}) give in view of the above calculations that
\begin{equation} \label{eq:4p9}
\int_k^1 \left( \frac{1}{t} \int_0^t h \right)^p \;dt \leq \frac{(\beta_0+1)^p}{(p-1)\beta_0}\int_k^1 h^p + \frac{\beta_0+1}{(p-1)\beta_0}\left(H_{p}(\delta) \int_0^k h^p-f^p\right).
\end{equation}

It is not difficult now to show, that the right side of (\ref{eq:4p9}) equals
\[
\int_k^1 h^p \omega_p \left(\frac{f^p-H_{p}(\delta) \int_0^k h^p }{\int_k^1 h^p}\right)^p, \ \ \text{where } \omega_p:(-\infty,1]\longrightarrow [1,+\infty)
\]
is the inverse of $H_p: H_p^{-1}$. Thus (\ref{eq:4p9}) states that for any $h:(0,1]\longrightarrow\mathbb{R}^{+}$ non-increasing, with $\int_0^1 h = f, \int_0^1 h^p = F$ and any $k\in(0,1]$ for which $1<\delta_k\leq \omega_p \left(\frac{f^p}{F} \right)$ and $\int_k^1 h^p > 0$, we have:
\begin{equation}\label{eq:4p10}
\int_k^1 \left( \frac{1}{t} \int_0^t h \right)^p \;dt \leq \int_k^1 h^p \omega_p \left(\frac{f^p-H_{p}(\delta) \int_0^k h^p }{\int_k^1 h^p}\right)^p.
\end{equation}
It is now easily seen that there exists a function $g_1:(0,1]\rightarrow\mathbb{R}^+$ satisfying the following properties:
$\int_0^1 g_1 = f, \int_0^1 g_1^p = F$ and $\frac{1}{t} \int_0^t g_1=\omega_p \left(\frac{f^p}{F} \right)g_1(t), \forall t\in(0,1]$. In fact we set $g_1(t):=\frac{f}{\alpha}t^{-1+\frac{1}{\alpha}}$, where $\alpha$ is given by $\alpha=\omega_p \left(\frac{f^p}{F}\right)$. 
Note that (\ref{eq:4p10}) is sharp since if we consider the function $h=g_1$ we get for any $k\in(0,1]$ that $\delta_k=\omega_p \left(\frac{f^p}{F} \right)$ and then the right side of (\ref{eq:4p10}) equals:

\begin{multline*}
\int_k^1 h^p \omega_p  \left( \frac{f^p-H_{p}(\delta) \int_0^k h^p }{\int_k^1 h^p} \right)^p
=\int_k^1 h^p \omega_p  \left( \frac{f^p}{F}\frac{ F-\int_0^k h^p }{\int_k^1 h^p} \right)^p=\\
=\int_k^1 h^p \omega_p\left(\frac{f^p}{F} \right)^p=\int_k^1 \left(\frac{1}{t} \int_0^t h \right)^p\; dt
\end{multline*}
so we have equality in (\ref{eq:4p10}) for this choice of $h$. Using Theorem 2.1, the sharpness of (\ref{eq:4p10}), and the calculus arguments that are given right above we conclude, by choosing $\beta=\delta-1$ and letting $\gamma$ tend to zero, the sharpness of inequality (\ref{eq:1p9}), for any $k\in(0,1]$.

Let now $h:(0,1]\longrightarrow\mathbb{R}^{+}$ be a non-increasing function, satisfying $\int_0^1 h = f$ and $ \int_0^1 h^p = F$, then the set of $k$'s belonging on $(0,1)$ for which $\delta_k\leq \omega_p \left(\frac{f^p}{F} \right)$ is a non empty subset of $(0,1]$.

This is obviously true for $h=g_1$, while if $h\neq g_1$ we have that $$\int_0^1 \left(\frac{1}{t} \int_0^t h \right)^p\; dt < F \omega_p \left(\frac{f^p}{F} \right)^p,$$ because $g_1$ is the unique non-increasing function on $(0,1]$ for which we get $\int_0^1 g_1 = f$, $\int_0^1 g_1^p = F$ and $\int_0^1 \left( \frac{1}{t} \int_0^t g_1 \right)^p \;dt = F \omega_p \left(\frac{f^p}{F} \right)^p$, (see the results of \cite{N8}).

Thus considering such a $k\in(0,1]$, we get by (\ref{eq:4p10}), that for any $h:(0,1]\longrightarrow\mathbb{R}^{+}$ non-increasing with $\int_0^1 h = f$, the
inequality
\begin{equation}\label{eq:4p11}
H_p(\delta^{'}_k)\geq \left(\frac{f^p-H_{p}(\delta_k) \int_0^k h^p }{\int_k^1 h^p}\right)
\end{equation}
is true, where $\delta^{'}_k=\left(\frac{\int_k^1 \left(\frac{1}{t} \int_0^t h \right)^p\; dt}{\int_k^1 h^p}\right)^{\frac{1}{p}}$, or that
\begin{equation}\label{eq:4p12}
  H_p(\delta_k)\int_0^k h^p + H_p(\delta^{'}_k)\int_k^1 h^p \geq f^p,
\end{equation}
for any $h$ and $k\in (0,1]$ as above.

Inequality (\ref{eq:4p12}) (or equivalently (\ref{eq:4p10})) and its sharpness gives us even more information for the geometric behaviour of $\mc M_{\mc T}$ because of the appearance of the free parameter $k\in(0,1]$ which is invoked under the condition $\delta_k\leq \omega_p \left(\frac{f^p}{F} \right)$. We turn now to applications of (\ref{eq:1p9}) to the study of the Bellman function $B_{p,q}^{\mc T}(f,A,F)$. For this purpose we first find the domain of definition of this Bellman function as is presented right below.  
\section{The range of $(f,A,F)$ } \label{sec:5}
Let $(X,\mu)$ be a probability space. We prove the following
\begin{lemma} \label{lem:5p1}
Let $\phi \in L^p(X,\mu)$ be non-negative and $(f,A,F)$ be such that 
$\int_X \phi \; d\mu=f$, $\int_X \phi^q \; d\mu=A$ and $\int_X \phi^p \; d\mu=F$ where the indices $1,q,p$ satisfy $1<q<p$. Then the following inequalities are satisfied
\begin{equation}\label{eq:5p1}
f^q \leq A\leq f^{\frac{p-q}{p-1}}F^{\frac{q-1}{p-1}}
\end{equation}
\end{lemma}

\begin{proof}
Since $(X,\mu)$ is a probability space the first inequality that is stated in (\ref{eq:5p1}) is an immediate consequence of Holder's inequality. For the proof of the second one we write
\begin{equation}\label{eq:5p2}
\int_X \phi^q \; d\mu = \int_X \phi^{q\lambda} \phi^{q(1-\lambda)} \; d\mu 
\end{equation}
where $\lambda$ is defined by $\lambda=\frac{p-q}{q(p-1)}$. Obviously the following inequality is true: $0<\lambda < \frac{1}{q}$. Then if we define $a=\frac{1}{q\lambda}$ and $b=\frac{1}{1-q\lambda}$ we get
$a=\frac{p-1}{p-q}>1$, $b=\frac{p-1}{q-1}>1$ and $\frac{1}{a}+\frac{1}{b}=1$. Then, applying Holder's inequality in (\ref{eq:5p2}) with exponents $a$ and $b$ we get 
$$A\leq \big(\int_X (\phi^{q\lambda})^a\big)^{\frac{1}{a}} \big(\int_X (\phi^{q(1-\lambda)})^b\big)^{\frac{1}{b}}.$$
The last stated inequality immediately now gives the right side of (\ref{eq:5p1}).
\end{proof}

\begin{remark}
It is well known that Holder's inequality:
\begin{equation}\label{eq:5p3}
\int_X \phi_1 \phi_2 \; d\mu \leq \big(\int_X \phi_1^a \; d\mu \big)^{\frac{1}{a}} \big(\int_X \phi_2^b \; d\mu \big)^{\frac{1}{b}}
\end{equation}
for the non-negative measurable functions $\phi_1$, $\phi_2$ and exponents $a,b>1$ satisfying $\frac{1}{a}+\frac{1}{b}=1$, becomes equality if and only if there exist non-negative constants $c,d$ with $|c|+|d|>0$
for which the equality
\begin{equation}\label{eq:5p4}
c\phi_1^a(x)=d\phi_2^b(x)
\end{equation}
holds for $\mu$-almost every $x\in X$. Thus by the proof of Lemma 2.1, that is given above we conclude that the equation
\begin{equation}\label{eq:5p5}
A=f^{\frac{p-q}{p-1}}F^{\frac{q-1}{p-1}}
\end{equation}
is true if and only if there exist $c,d$ as above such that $c\phi(x)=d\phi^p(x)$, for $\mu$-almost every $x\in X$. So if we consider the set 
$C_{\phi}=\{x\in X: \phi(x)\neq 0\}$, we must have that $\phi$ should be constant on $C_{\phi}$. We conclude that (\ref{eq:5p5}) is true if and only if there exist a non-negative real number $c$ and a measurable subset $C$ of $X$ for which the equality $\phi=c\chi_C$ is almost everywhere true, where $\chi_C$ denotes the characteristic function of $C$.

Finally we investigate when we do have equality on the first  inequality that is stated in (\ref{eq:5p1}). If $\int_X \phi \; d\mu=f$,  $\int_X\phi^q \; d\mu=A$ then, $f^q=A$ if and only if the function $\phi$ is constant almost everywhere on $X$, and this is a consequence of the condition (\ref{eq:5p4}), assuming that $\phi_1=\phi$, $\phi_2=1$ and $a=q$.
\end{remark}

We now work in the opposite direction. We state the following
\begin{theorem}\label{thm:5p1}
Let $(X,\mu)$ be a non-atomic probability space and suppose that $f,A,F$ are constants satisfying $f^q<A<f^{\frac{p-q}{p-1}}F^{\frac{q-1}{p-1}}$, where $f>0$ and $1<q<p$. Then there exist $a, b>0$ and pairwise disjoint measurable subsets $C_1, C_2$ of $X$, each one of them of positive measure, such that the function defined by $\phi=a\chi_{C_1}+b\chi_{C_2}$, satisfies $\int_X \phi \; d\mu=f$, $\int_X \phi^q \; d\mu=A$ and $\int_X \phi^p \; d\mu=F$.
\end{theorem}

\begin{proof}
By considering non-increasing equimeasurable rearrangements of functions, with the domain of their definition being the interval $(0,1]$, it is enough to prove that, under the conditions that are stated for $f, A, F$ in the statement of the theorem, there exist $a, b>0$ and $k_1, k_2>0$ with $k_1<1$ and $k_2\leq 1-k_1$ for which the function $\phi:(0,1]\rightarrow \mathbb{R}$ defined by $\phi=a\chi_{(0,k_1]}+b\chi_{(k_1,k_1+k_2]}$, satisfies the integral conditions that are stated in the theorem.

Thus we search for $a,b,k_1,k_2$ as above such that the following equations are satisfied:
\begin{equation}\label{eq:5p6}
k_1a+k_2b=f,
\end{equation}
\begin{equation}\label{eq:5p7}
k_1a^q+k_2b^q=A,
\end{equation}
\begin{equation}\label{eq:5p8}
k_1a^p+k_2b^p=F,
\end{equation}
and the inequality $k_2\leq 1-k_1$ holds also true.

We work on this system of equations in the following manner.
First, (\ref{eq:5p6}) is equivalent to $b=\frac{f-k_1a}{k_2}$, thus giving us that $a$ should satisfy $a<\frac{f}{k_1}$. Then (\ref{eq:5p7}) and (\ref{eq:5p8}) respectively become

\begin{equation}\label{eq:5p9}
k_1a^q+\frac{(f-k_1a)^q}{k_2^{q-1}}=A,
\end{equation}
\begin{equation}\label{eq:5p10}
k_1a^p+\frac{(f-k_1a)^p}{k_2^{p-1}}=F.
\end{equation}
In view of (\ref{eq:5p9}) and (\ref{eq:5p10}), $a$ should in fact satisfy
$$0<a<min\{ \frac{f}{k_1},(\frac{A}{k_1})^{\frac{1}{q}}, (\frac{F}{k_1})^{\frac{1}{p}} \}.$$
Now (\ref{eq:5p9}) gives
\begin{equation}\label{eq:5p11}
k_2=\frac{(f-k_1a)^\frac{q}{q-1}}{(A-k_1a^q)^\frac{1}{q-1}}
\end{equation}
and in view of (\ref{eq:5p11}), (\ref{eq:5p10}) becomes
\begin{equation}\label{eq:5p12}
k_1a^p+\frac{(A-k_1a^q)^\frac{p-1}{q-1}}{(f-k_1a)^\frac{p-q}{q-1}}=F.
\end{equation}
Since $k_1,k_2$ should satisfy $k_2\leq 1-k_1$, we must have in view of 
(\ref{eq:5p11}) that
\begin{equation}\label{eq:5p13}
k_1a^q+\frac{(f-k_1a)^q}{(1-k_1)^{q-1}}\leq A.
\end{equation}
From the above we conclude that if we find $k_1 \in (0,1)$ and $a>0$ such that (\ref{eq:5p12}) and (\ref{eq:5p13}) are satisfied, then by defining $k_2$ by (\ref{eq:5p11}) and $b$ by $b=\frac{f-k_1a}{k_2}$, we will have that the initial system of equations (\ref{eq:5p6}), (\ref{eq:5p7}) and (\ref{eq:5p8}), will be satisfied (for these choices of $a,b,k_1,k_2$) with the inequality $k_2\leq 1-k_1$ being also true.

For this purpose we define the following functions
\begin{equation}\label{eq:5p14}
F_{k_1}(a)=k_1a^p+\frac{(A-k_1a^q)^\frac{p-1}{q-1}}{(f-k_1a)^\frac{p-q}{q-1}}.
\end{equation}
\begin{equation}\label{eq:5p15}
G_{k_1}(a)=k_1a^q+\frac{(f-k_1a)^q}{(1-k_1)^{q-1}},
\end{equation}
in the following range of values of $a$:
$0<a<min\{ \frac{f}{k_1},(\frac{A}{k_1})^{\frac{1}{q}}, (\frac{F}{k_1})^{\frac{1}{p}} \}.$
Then (\ref{eq:5p12}) and (\ref{eq:5p13}) are equivalent to
\begin{equation}\label{eq:5p16}
F_{k_1}(a)=F,
\end{equation}
\begin{equation}\label{eq:5p17}
G_{k_1}(a)\leq A,
\end{equation}
and thus we search for $a$ in the above range and $k_1 \in (0,1)$, so that (\ref{eq:5p16}) and (\ref{eq:5p17}) hold true.
We now study the behaviour of the function $G_{k_1}$, where the value $a$ ranges in the interval $[0,\frac{f}{k_1}]$. It's derivative equals
$$G'_{k_1}(a)=qk_1a^{q-1}-qk_1\big(\frac{f-k_1a}{1-k_1} \big)^{q-1},$$
thus $G_{k_1}$ is strictly increasing on the interval $[f,\frac{f}{k_1}]$ and strictly decreasing on $[0,f]$, and thus attains its minimum value at the point $a_0=f$, with $G_{k_1}(f)=f^q<A$. Moreover $G_{k_1}(0)=\frac{f^q}{(1-k_1)^{q-1}}$ and $G_{k_1}(\frac{f}{k_1})=\frac{f^q}{k_1^{q-1}}$.
At this point we consider two cases studying its one of them separately.

{\bf{CASE I}: $f,A$ satisfy $\big(\frac{f^q}{A}\big)^{\frac{1}{q-1}}<\frac{1}{2}$}.

In this case we choose an arbitrary $k_1 \in \big(\big(\frac{f^q}{A}\big)^{\frac{1}{q-1}}, \frac{1}{2} \big)$. Then we immediately see that 
$\frac{f}{k_1}<\big(\frac{A}{k_1}\big)^{\frac{1}{q}}$ and since
$\big(\frac{f^p}{F}\big)^{\frac{1}{p-1}}<\big(\frac{f^q}{A}\big)^{\frac{1}{q-1}}<k_1$, we get that $\frac{f}{k_1}<\big(\frac{F}{k_1}\big)^{\frac{1}{p}}$, thus 
$$min\{ \frac{f}{k_1},(\frac{A}{k_1})^{\frac{1}{q}}, (\frac{F}{k_1})^{\frac{1}{p}} \}=\frac{f}{k_1}.$$
Thus in this case we search for an $a\in (0, \frac{f}{k_1})$ for which
(\ref{eq:5p16}) and (\ref{eq:5p17}) are true. This is possible due to the following reasons: First for each $a\in (0, \frac{f}{k_1})$ it is true that $G_{k_1}(a)<A$, since $G_{k_1}$ has the stated monotonicity properties and $G_{k_1}(\frac{f}{k_1})=\frac{f^q}{k_1^{q-1}}<A$, while
$G_{k_1}(0)=\frac{f^q}{(1-k_1)^{q-1}}<2^{q-1}f^q<A$, by the choice of $k_1$. Thus we just need to find an $a\in (0, \frac{f}{k_1})$ for which 
$F_{k_1}(a)=F$. But it is true that $F_{k_1}(0)= \frac{A^{\frac{p-1}{q-1}}}{f^{\frac{p-q}{q-1}}}<F$ and $\lim_{a\rightarrow\big(\frac{f}{k_1}\big)^-}F_{k_1}(a)=+\infty$, thus by continuity reasons we get the existence of of an $a\in (0, \frac{f}{k_1})$ satisfying the desired properties. The proof is complete in CASE I.

{\bf{CASE II}: $f,A$ satisfy $\big(\frac{f^q}{A}\big)^{\frac{1}{q-1}}\geq\frac{1}{2}$}.

In this case we first choose $k_0$ such that $0<k_0<\frac{1}{2}$ for which $\frac{f^q}{(1-k_0)^{q-1}}<A$. This is possible due to the fact that $f^q<A$. Then note that for every $k_1\in (0,k_0)$ the following inequalities are true:
\begin{equation}\label{eq:5p18}
G_{k_1}(0)=\frac{f^q}{(1-k_1)^{q-1}}<\frac{f^q}{(1-k_0)^{q-1}}<A<\frac{f^q}{k_1^{q-1}}=G_{k_1}(\frac{f}{k_1}),
\end{equation}
where the last inequality in (\ref{eq:5p18}) is true since $\big(\frac{f^q}{A}\big)^{\frac{1}{q-1}}\geq\frac{1}{2}>k_1$.

Now by (\ref{eq:5p18}) and the monotonicity properties that $G_{k_1}$ has, we get that for each $k_1\in (0,k_0)$ there exists unique $\delta_{k_1} \in (0,\frac{f}{k_1})$, such that $G_{k_1}(\delta_{k_1})=A$.
Note also that for every $k_1\in (0,k_0)$, we have $\frac{f^q}{A}>k_1^{q-1}$, thus the following inequality is true for all such $k_1$,
\begin{equation}\label{eq:5p19} 
\big(\frac{A}{k_1}\big)^{\frac{1}{q}}<\frac{f}{k_1}.
\end{equation}

We now restrict further the range of permitted values of $k_1$: Consider $k_0'$ such that $0<k_0'<k_0<\frac{1}{2}$, such that the following inequality is true: $\frac{A^{\frac{p}{q}}}{(k_0')^{\frac{p}{q}-1}}>F$. Then for every $k_1\in (0,k_0')$ we have
 $\frac{A^{\frac{p}{q}}}{(k_1)^{\frac{p}{q}-1}}>\frac{A^{\frac{p}{q}}}{(k_0')^{\frac{p}{q}-1}}>F$, thus the following is true
\begin{equation}\label{eq:5p20} 
\big(\frac{A}{k_1}\big)^{\frac{1}{q}}>\big(\frac{F}{k_1}\big)^{\frac{1}{p}}
\end{equation}

From (\ref{eq:5p19}) and (\ref{eq:5p20}) we have that for every $k_1\in (0,k_0')$ it is true that

$$min\{ \frac{f}{k_1},(\frac{A}{k_1})^{\frac{1}{q}}, (\frac{F}{k_1})^{\frac{1}{p}} \}=(\frac{F}{k_1})^{\frac{1}{p}}.$$
As a consequence we just need to find $k_1\in (0,k_0')$ and 
$a\in (0,(\frac{F}{k_1})^{\frac{1}{p}})$ such that $F_{k_1}(a)=F$ and 
$G_{k_1}(a)\leq A$. 

Now for every $k_1\in (0,k_0')$, $\delta_{k_1}$ satisfies $G_{k_1}(\delta_{k_1})=A$, or equivalently  
\begin{equation}\label{eq:5p21}
k_1\delta_{k_1}^q+\frac{(f-k_1\delta_{k_1})^q}{(1-k_1)^{q-1}}=A
\end{equation}
thus we have $k_1\delta_{k_1}^q<A$, which yields $\delta_{k_1}<(\frac{A}{k_1})^{\frac{1}{q}}$. 

Then by (\ref{eq:5p21}) we have
\begin{equation}\label{eq:5p22}
A-k_1\delta_{k_1}^q=\frac{(f-k_1\delta_{k_1})^q}{(1-k_1)^{q-1}}
\end{equation}
and by (\ref{eq:5p14}) we obtain
\begin{equation}\label{eq:5p23}
F_{k_1}(\delta_{k_1})=k_1\delta_{k_1}^p+\frac{(A-k_1\delta_{k_1}^q)^\frac{p-1}{q-1}}{(f-k_1\delta_{k_1})^\frac{p-q}{q-1}},
\end{equation}
so (\ref{eq:5p22}) and (\ref{eq:5p23}) give
\begin{equation}\label{eq:5p24}
F_{k_1}(\delta_{k_1})=k_1\delta_{k_1}^p+\frac{(f-k_1\delta_{k_1})^p}{(1-k_1)^{p-1}}.
\end{equation}
Since $\delta_{k_1}<(\frac{A}{k_1})^{\frac{1}{q}}$, we get $k_1\delta_{k_1}<k_1^{1-\frac{1}{q}}A \rightarrow 0$, as $k_1\rightarrow 0^+$, so by (\ref{eq:5p22}), letting $k_1\rightarrow 0^+$ we conclude 
$k_1\delta_{k_1}^q \rightarrow A-f^q>0$, and as a consequence we get
$\delta_{k_1}^q \rightarrow +\infty$, and thus $\delta_{k_1} \rightarrow +\infty$, when $k_1\rightarrow 0^+$. But then by the above limit conditions, we have
$k_1\delta_{k_1}^p=k_1\delta_{k_1}^q \delta_{k_1}^{p-q} \rightarrow +\infty$, as $k_1\rightarrow 0^+$.

So it is possible to choose a $k_1\in (0,k_0')$, such that $k_1\delta_{k_1}^p>F$, which by (\ref{eq:5p24}) gives $F_{k_1}(\delta_{k_1})>F$, while also by (\ref{eq:5p14}), we have $F_{k_1}(0)=\frac{A^\frac{p-1}{q-1}}{f^\frac{p-q}{q-1}}<F$. Using the above considerations we conclude that there exists $a\in(0,\delta_{k_1})$, for which $F_{k_1}(a)=F$. But then by (\ref{eq:5p14}) we deduce $k_1a^p<F$, that is 
$a\in (0,(\frac{F}{k_1})^{\frac{1}{p}})$, which ensures us that $a$ lives in the desired interval of definition. Remember now that 
$G_{k_1}(\delta_{k_1})=A$, while $0<a<\delta_{k_1}$ and since (\ref{eq:5p18}) is true, we have
$$G_{k_1}(0)=\frac{f^q}{(1-k_1)^{q-1}}<A<\frac{f^q}{k_1^{q-1}}=G_{k_1}(\frac{f}{k_1}).$$
Then, by the monotonicity properties that $G_{k_1}$ has, and by the choices of $\delta_{k_1}$ and $a$ we immediately get $G_{k_1}(a)<A$, so the desired conditions for $k_1$ and $a$ hold true and the proof is complete in CASE II also.

\end{proof}
\section{Sharpness of Theorem 1.1 - connections with $B_{p,q}^{\mc T}(f,A,F)$ } \label{sec:6}

As we saw in Section 5 the set of triples $(f,A,F)$, for which there exists $\phi$, as in the definition of (\ref{eq:1p8}) is realized by the conditions
\begin{equation}\label{eq:6p1}
f^q\leq A \leq f^{\frac{p-q}{p-1}} F^{\frac{q-1}{p-1}}. 
\end{equation}
Assume now that $f,F$, are given such that $0<f^p\leq F$. Define 
$k_0=\big(\frac{f^p}{F}\big)^{\frac{1}{p-1}}$ and the function $g$ on the interval $[k_0,1]$, by $g(k)=k^{q-1} H_q(\omega_p(\frac{f^p}{k^{p-1}F}))$.
Notice that $g$ is well defined and is positive on the interval $[k_0,1]$, since $\frac{f^p}{k^{p-1}F} \in (\frac{f^p}{F},1] \subseteq (0,1] $
and $\omega_p(\frac{f^p}{k^{p-1}F}) \in [1,\frac{p}{p-1}) \subseteq [1,\frac{q}{q-1})$, for all $k\in [k_0,1]$. We will need the following simple

\begin{lemma} \label{lem:6p1}
The function $g:[k_0,1] \rightarrow \mathbb{R}$ is strictly increasing and continuous.
\end{lemma}

\begin{proof}
We differentiate $g$ and after some simple calculations we see that $g'(k)$ is of the same sign as $H_q(t)-\frac{q}{p}\frac{1}{t^{p-q}}H_p(t)$, where $t=\omega_p(\frac{f^p}{k^{p-1}F})$. Thus $g'(k)$ has the same sign with the constant $\frac{q}{p}-\frac{q-1}{p-1}$ which is positive. The continuity of $g$ is obvious.
\end{proof}
Consider now the case where $f,A,F$, additionally to (\ref{eq:6p1}), satisfy the condition 
$\omega_q(\frac{f^q}{A})> \omega_p(\frac{f^p}{F})$, which is equivalent to
\begin{equation}\label{eq:6p2}
\frac{f^q}{H_q(\omega_p(\frac{f^p}{F}))}< A \leq f^{\frac{p-q}{p-1}} F^{\frac{q-1}{p-1}}. 
\end{equation}
Then, since $g(k_0)=(\frac{f^p}{F})^{\frac{q-1}{p-1}}$, we conclude by (\ref{eq:6p2}), that $g(k_0) \leq \frac{f^q}{A}<g(1)$, thus by Lemma 5.1 we get that there exists (for each such $A$) unique $k\in [k_0,1)$ for which $g(k)=\frac{f^q}{A}$, or equivalently
\begin{equation}\label{eq:6p3}
\omega_q(\frac{f^q}{k^{q-1}A})=\omega_p(\frac{f^p}{k^{p-1}F})=:\epsilon_0 .
\end{equation}

From the above we conclude that as $A$ ranges according to (\ref{eq:6p2}), the respective value of $k$ ranges in the whole interval $[k_0,1)$ and conversely, if for each $k\in [k_0,1)$ we define $A=\frac{f^q}{g(k)}$  then $A$ ranges over the whole interval that is described by (\ref{eq:6p2}). Thus there is an one to one correspondence between $k$ and $A$ on the respective half open intervals.

At this point we remark also that if $A$ where such that we had equality in the first inequality in (\ref{eq:6p2}), then by the results of \cite{DN}, the value of $B_{p,q}^{\mc T}(f,A,F)$ will be determined by
$B_p^{\mc T}(f,F,f,1)$, that is, in this case, we will have 

$$B_{p,q}^{\mc T}(f,A,F)=F\omega_p(\frac{f^p}{F})^p.$$

Now let $A$ and $k$ be as in (\ref{eq:6p2}) and (\ref{eq:6p3}) respectively. We define the following non-increasing function on the interval $(0,1]$: $\lambda(t)=ct^{-1+\frac{1}{\epsilon_0}}$, for $t\in (0,k]$, and $\lambda(t)=0$ for $t\in (k,1]$, where the constant c is such that the integral inequality $\int_0^1 \lambda=f$ holds. Solving this last equation on $c$ we deduce $c=\frac{f}{\epsilon_0k^{\frac{1}{\epsilon_0}}}$. Substituting this value to the definition of the function $\lambda$, and using (\ref{eq:6p3}) we easily get that the integral conditions $\int_0^1 \lambda^q=A$ and $\int_0^1 \lambda^p=F$ are true. Additionally we easily see that the equality
\begin{equation}\label{eq:6p4}
\frac{1}{t}\int_0^t \lambda=\epsilon_0 \lambda(t), t\in (0,k],
\end{equation}
is true.
Note also that if we define 
$\delta=\delta_{k'}=\left( \frac{\int_0^{k'}\left(\frac{1}{t}\int_0^t \lambda\right)^p\; dt}{\int_0^{k'} \lambda^p} \right)^{\frac{1}{p}}$, for $0<k'<k$,  we see by (\ref{eq:6p4}), that $\delta=\epsilon_0=\omega_p(\frac{f^p}{k^{p-1}F})\leq \omega_p(\frac{f^p}{F})$, that is (\ref{eq:4p5}) is true for this choice of the function $\lambda$, for every $k'$ such that $0<k'<k$, while also $\int_{k'}^1\lambda>0$ for every such $k'$. 

We prove now that we have equality on inequality (\ref{eq:4p10}) in the limit, as $k'\rightarrow k^{-}$, where we replace $k$ by $k'$ and $h$ by $\lambda$. More precisely the inequality

\begin{equation}\label{eq:6p5}
\int_{k'}^1 \left( \frac{1}{t} \int_0^t \lambda \right)^p \;dt \leq \int_{k'}^1 h^p \omega_p \left(\frac{f^p-H_{p}(\delta_{k'}) \int_0^{k'} \lambda^p }{\int_{k'}^1 \lambda^p}\right)^p,
\end{equation} 
is true for every $k'\in (0,k)$ by the conditions that $\lambda$ satisfies, as mentioned above and by the results of Section 4.

Now if we let $k'\rightarrow k^{-}$, the left side of (\ref{eq:6p5}) tends to $$\int_k^1 \left( \frac{1}{t} \int_0^t \lambda \right)^p \;dt=\int_k^1f^p(\frac{1}{t})^p \;dt=\frac{1}{p-1}(\frac{f^p}{k^{p-1}}-f^p)=:\alpha(k).$$

Letting $k'\rightarrow k^{-}$ on the right side of (\ref{eq:6p5}) we immediately see that it tends to the limit
$$\lim_{y\rightarrow 0^+}y\omega_p(\frac{-(p-1)\alpha(k)}{y})^p= \alpha(k).$$ 

Thus by using the results of Section 4 we get the following 
\begin{corollary} \label{cor:6p1}
If the variables $f,F,k$ satisfy $f^p<F$ and 
$k\in  [(\frac{f^p}{F})^{\frac{1}{p-1}},1)$, then there exists a sequence of non-negative functions $(\phi_n)$ defined on $(X,\mu)$ and satisfying
$\int_X \phi_n d\mu=f$, $\int_X \phi^p_n d\mu=F$, a sequence of measurable sets $(K_n)$ with $\mu(K_n) \rightarrow k$, and two sequences $(\beta_n)$, $(\gamma_n)$ such that $\beta_n\geq \gamma_n >0$, in a way that if we replace $\phi_n, K_n,\beta_n, \gamma_n$ in (\ref{eq:1p9}) we get equality in the limit. Moreover we can achieve every $\phi_n$ to satisfy $\int_X \phi^q_n d\mu=A$, where $A$ is given by (\ref{eq:6p3}).
\end{corollary}

\begin{corollary} \label{cor:6p2}
$B_{p,q}^{\mc T}(f,A,F)\geq \omega_p(\frac{f^p}{k^{p-1}F})^pF + \frac{1}{p-1}(\frac{f^p}{k^{p-1}}-f^p)$, for every $(f,A,F)$ satisfying (\ref{eq:6p2}) where $k$ is given by (\ref{eq:6p3}).
\end{corollary}

\begin{proof}
Immediate, since the non-increasing function $\lambda$ satisfies the integral conditions on $(0,1]$ corresponding to (\ref{eq:1p8}) and thus by Theorem 2.1 in Section 2, we get 
\begin{equation}\label{eq:6p6}
B_{p,q}^{\mc T}(f,A,F)\geq \int_0^1 \left( \frac{1}{t} \int_0^t \lambda \right)^p \;dt, 
\end{equation}
while the right side of (\ref{eq:6p6}) equals 
$$\int_0^k \left( \frac{1}{t} \int_0^t \lambda \right)^p \;dt+\int_k^1 \left( \frac{1}{t} \int_0^t \lambda \right)^p \;dt= \omega_p(\frac{f^p}{k^{p-1}F})^pF + \frac{1}{p-1}(\frac{f^p}{k^{p-1}}-f^p) $$
\end{proof}
At this point we remark the following

{\bf{Conjecture :}} In Corollary 6.2 we in fact have equality.  
Note also that the above mentioned conjecture is true on the extreme case $A=\frac{f^q}{H_q(\omega_p(\frac{f^p}{F}))}$ (equivalently when $k=1$), and this is because of the results in \cite{DN}.

\section{A lower bound for $B_{p,q}(f,A,F)$ when $(f,A,F)$ satisfies $\omega_q \left ( \frac{f^q}{A} \right ) < \omega_p \left ( \frac{f^p}{F} \right )$} \label{sec:7}

In this section we study the rest of the domain of the definition of $B_{p,q}(f,A,F)$,so that we assume that $f,A,F$ satisfy
\begin{equation} \label{eq:7p1}
   f^q < A < \frac{f^q}{H_q \left ( \omega_p \left ( \frac{f^p}{F} \right ) \right )} 
\end{equation}
Now for an arbitrary $k \in (0,1)$ we define the function
$g_k : [0,1] \to \mathbb{R}^+$ by the following way:
\[
g_k(t) = \begin{cases}
    A_1 t^{-1+\frac{1}{a}}\ , \ t\in (0,k] \\
    c \ , \ t \in (k,1]
\end{cases}
\]
where the constants $A_1,a,c$ obey the following rules:
\begin{align}
    &c = \frac{f-B_0}{1-k} \ , \label{eq:7p2} \\
    &A_1 = \frac{B_0 k^{-1/a}}{a} \ , \label{eq:7p3} \\
    &a = \omega_p(z_0) \ , \label{eq:7p4}
\end{align}
where $z_0$ is given by 
\begin{equation} \label{eq:7p5}
z_0 = \frac{B_0^p}{k^{p-1} \left ( F - \frac{(f-B_0)^p}{(1-k)^{p-1}} \right )}
\end{equation}
and $B_0$ satisfies
\begin{equation} \label{eq:7p6}
    \omega_p(z_0) = \frac{B_0}{k} \frac{1-k}{f-B_0} .
\end{equation}
Such a choice of $B_0$ is possible (for details see \cite{M}, page 321-322, Lemma 7). Then according to the results in \cite{M} , $B_0$ should also satisfy
\begin{equation} \label{eq:7p7}
    \frac{f(1-k)}{f-B_0} = \omega_{p,k} \left ( \frac{f^p}{p} \right ) 
\end{equation}
For the definition of $\omega_{p,k} \left ( \frac{f^p}{p} \right ) $,see Lemma $7(i)$, page 321 in \cite{M}.

\vspace{5mm}

\noindent Now in \cite{N5} it is proved that the function $g_k$ that is constructed above satisfies
\begin{align}
    &\int_0^1 g_k = f \ , \ \int_0^1 g_k^p = F \ , \ \forall k \in (0,1) \label{eq:7p8} \\
    &\int_0^k g_k = B_0 \ , \label{eq:7p9} \\
    &\frac{1}{t} \int_0^t g_k = a g_k(t) \ , \ \forall t \in (0,1) \ , \ \forall k \in (0,1) \ , \label{eq:7p10} \\
    &g_k(k) = c = \frac{f-B_0}{1-k} \ , \label{eq:7p11}
\end{align}
Note that $B_0$ depends on $f,F,k$ and that (\ref{eq:7p10}) implies that $g_k$ is continuous and non-increasing in $(0,1]$.

\vspace{5mm}

\noindent We now evaluate the $L^q$-integral of $g_k,$ for any $k \in (0,1),$ as follows:
\begin{align*}
    A_k &:= \int_0^1 g_k^q = \int_0^k g_k^q + \int_k^1 g_k^q \\
    &= \int_0^k g_k^q + (1-k)c^q \\
    &= \frac{(f-B_0)^q}{(1-k)^{q-1}} + \int_0^k A_1^q t^{-q+\frac{q}{a}}dt \\
    &= \frac{(f-B_0)^q}{(1-k)^{q-1}} + \frac{B_0^q k^{-q/a}}{a^q} \cdot \frac{k^{-q+q/a+1}}{(-q+q/a+1)} \\
    &= \frac{(f-B_0)^q}{(1-k)^{q-1}} + \frac{B_0^q}{k^{q-1}} \frac{1}{qa^{q-1}-(q-1)a^q} \\
    &= \frac{(f-B_0)^q}{(1-k)^{q-1}} + \frac{B_0^q}{k^{q-1}} \frac{1}{H_q(a)} \\
    &= \frac{(f-B_0)^q}{(1-k)^{q-1}} + \frac{B_0^q}{k^{q-1}} \frac{1}{H_q(\omega_p(z_0))} \\
    &= (1-k) \left ( \frac{f-B_0}{1-k} \right )^q + \frac{B_0^q}{k^{q-1}} \frac{1}{H_q(\omega_p(z_0))} \\
    &= (1-k) \left ( \frac{B_0}{k} \right )^q \frac{1}{\omega_p(z_o)^q} + k \left ( \frac{B_0}{k} \right )^q \frac{1}{H_q(\omega_p(z_0))} \\
    &= \left ( \frac{B_0}{k} \right )^q \left [(1-k) \frac{1}{\omega_p(z_0)^q} + k \frac{1}{H_q(\omega_p(z_0))}  \right ] \ , \numberthis \label{eq:7p12}
\end{align*}
where we have used the definition of $g_k$, and relations \eqref{eq:7p2}, \eqref{eq:7p3}, \eqref{eq:7p4} and \eqref{eq:7p6}.

\vspace{5mm}

\noindent We prove the following \\

\noindent \underline{\text{Claim} \ 1}:
\[
 \quad \lim \limits_{k \to 1^-} A_k = \frac{f^q}{H_q \left (\omega_p \left ( \frac{f^p}{F} \right ) \right )}
\]
\underline{Proof of claim 1}: \\

\noindent Note that since $g_k$ is non-increasing on $(0,1]$, we have
\[
\frac{f-B_0}{1-k} = \frac{1}{1-k} \int_k^1 g_k \leq \int_0^1 g_k = f \ ,
\]
which gives $f-B_0\leq f(1-k)$, thus letting $k \to 1^-$ we get
\[
\lim \limits_{k \to 1^-} B_0 = f .
\]
Additionally
\[
\frac{(f-B_0)^q}{(1-k)^{q-1}} = (1-k) \left ( \frac{f-B_0}{1-k} \right )^q \leq (1-k) f^q ,
\]
so that
\[
\lim \limits_{k \to 1^-} \frac{(f-B_0)^q}{(1-k)^{q-1}} = 0 ,
\]
and analogously $$\lim \limits_{k \to 1^-} \frac{(f-B_0)^p}{(1-k)^{p-1}} = 0 . $$ Thus we get
\[
z_0 := \frac{B_0^p}{k^{p-1}\left ( F - \frac{(f-B_0)^p}{(1-k)^{p-1}} \right ) } \rightarrow \frac{f^p}{F} , \quad \text{as} \quad k \to 1^-
\]
and by the chain of equalities in \eqref{eq:7p12} we get that
\[
\lim \limits_{k \to 1^-} A_k = \frac{f^q}{H_q \left ( \omega_p \left ( \frac{f^p}{F} \right ) \right )} \ , 
\]
so that claim $1$ is proved. \\

\noindent \underline{\text{Claim} \ 2}:
\[  
    \lim \limits_{k \to 0^+} A_k = f^q
\]

\noindent \underline{Proof of claim 2}: \\

\noindent By \eqref{eq:7p6} we get
\[
B_0 = k \ \frac{f-B_0}{1-k} \ \omega_p(z_0) \ ,
\]
so that $B_0 \to 0$ , as $k \to 0^+$ , since the factor $\frac{f-B_0}{1-k}$ , remains bounded as $k \to 0^+$ and $\omega_p(z_0) \in \left [ 1 , \frac{p}{p-1} \right ]$ , for any $k \in (0,1)$. \\

\noindent Moreover, for any $k \in (0,1)$
\begin{align*}
&\frac{B_0^p}{k^p} = \left ( \frac{1}{k} \int_0^k g_k \right )^p \leq \frac{1}{k} \int_0^k g_k^p \leq  \frac{1}{k}\int_0^1 g_k^p = \frac{1}{k} F \\
&\Rightarrow \frac{B_0^p}{k^{p-1}} \leq F \ , \ \forall k \in (0,1).
\end{align*}
Thus $B_0$ satisfies $B_0 \leq k^{1-\frac{1}{p}} F^{1/p} \ ,$ and consequently we have
\[
\frac{B_0^q}{k^{q-1}} \leq \frac{k^{q-\frac{q}{p}} F^{q/p}}{k^{q-1}} = k^{1-\frac{q}{p}} F^{q/p} \to 0 \ ,
\]
as $k \to 0^+$ , since $q \in (1, p)$ . \\

\noindent Moreover, since $1 \leq \omega_p (z_0) \leq \frac{p}{p-1} ,$ for any $k \in (0,1) ,$ we have
\[
\frac{1}{H_q(\omega_p(z_0))} \leq \frac{1}{H_q \left ( \frac{p}{p-1} \right )} .
\]
By the comments above we immediately get that $$\frac{B_0^q}{k^{q-1}} \ \frac{1}{H_q(\omega_p(z_0))} \to 0 ,$$ 
as $k \to 0^+ .$ Thus by \eqref{eq:7p12} we have that
\[
\lim \limits_{k \to 0^+} A_k = \lim \limits_{k \to 0^+} \frac{(f-B_0)^q}{(1-k)^{q-1}} = f^q \ ,
\]
and claim 2 is proved. \\

\noindent \underline{\text{Claim} \ 3}:
\[
\text{The function} \ k \mapsto A_k \ , \text{defined on} \ k \in (0,1) \ \text{is continuous}.
\]
\noindent \underline{Proof of claim 3}: \\

\noindent We write $B_0 = B_0(k)$ and $Z_0 = Z_0(k)$ , denoting the dependence of $B_0$ and $z_0$ on $k \in (0,1)$. If we prove that $B_0$ is a continuous function on $k \in (0,1)$ , then by \eqref{eq:7p5} and \eqref{eq:7p12} , the claim follows. \\

\noindent For this purpose, it is enough to prove that, for any fixed $k \in (0,1)$ and every sequence $(k_n)_{n \in \mathbb{N}} \subseteq (0,1)$ for whick $k_n \to k$, there exists a subsequence: $(k_{n_m})_{m \in \mathbb{N}}$ for which 
\begin{equation} \label{eq:7p13}
    \lim \limits_{n} B_0(k_{n_m}) = B_0(k) .
\end{equation}
But for any $n \in \mathbb{N}$ , $B_0(k_n)$ satisfies (using \eqref{eq:7p6}) the following equality:
\begin{align*}
    \omega_p(z_0(k_n)) &= \omega_p \left ( \frac{B_0(k_n)^p}{k_n^{p-1} \left ( F - \frac{(1 - B_0(k_n))^p}{(1-k_n)^{p-1}} \right )} \right ) \\
    &= \frac{B_0(k_n)}{k_n} \cdot \frac{1-k_n}{1-B_0(k_n)} \numberthis \label{eq:7p14} .
\end{align*}
Also, $(B_0(k_n))_{n \in \mathbb{N}}$ is a bounded sequence since
\[
0 \leq B_0(k_n)_{n \in \mathbb{N}} = \int_0^{k_n} g_{k_n} \leq \int_0^1 g_{k_n} = f ,
\]
so that there exists a subsequence: $(B_0(k_{n_m}))_{m \in \mathbb{N}}$ for which
\[
B_0(k_{n_m}) \to \lambda , \quad \text{for some} \ \lambda \in [0,f] .
\]

\noindent Note also that $\lambda$ should satisfy $\lambda \neq f$, since otherwise, if we let $n=n_m \to \infty$ in \eqref{eq:7p14} we would get that the right hand side will tend to infinity, while the left hand side is bounded. Thus $\lambda \in [0,f).$ \\

\noindent Then, taking limits in \eqref{eq:7p14} (replacing first $n$ by $n_m$) we get
\begin{equation} \label{eq:7p15}
    \omega_p(z) = \frac{\lambda}{k} \ \frac{1-k}{f-\lambda} ,
\end{equation}
where
\begin{equation} \label{eq:7p16}
    z = \frac{\lambda^p}{k^{p-1} \left ( f - \frac{(f-\lambda)^p}{(1-k)^{p-1}}\right )} , 
\end{equation}
Then by the proof of Lemma $7$, in \cite{M} (pages 321-322) we see that $\lambda$ should satisfy:
\begin{equation} \label{eq:7p17}
    \frac{f(1-k)}{f-\lambda} = \omega_{p,k} \left ( \frac{f^p}{F} \right ) ,
\end{equation}
while the definition of $B_0$ implies also that
\begin{equation} \label{eq:7p18}
    \frac{f(1-k)}{f-B_0} = \omega_{p.k} \left ( \frac{f^p}{F} \right ) ,
\end{equation}
By \eqref{eq:7p17} and \eqref{eq:7p18} we immediately get that $\lambda = B_0(k)$, thus yielding $$\lim \limits_{n} B_0(k_{n_m})=B_0(k)$$ and \eqref{eq:7p13} follows, thus giving the proof of claim 3. \\

\noindent By claims 1, 2, 3 above we get that, for any $$ A \in \left ( f^q, \frac{f^q}{H_q \left ( \omega_p \left ( \frac{f^p}{F} \right ) \right )} \right ) ,$$
there exists $k \in (0,1)$ for which: $A_k = A$. As a consequence we have proved the following

\begin{corollary} 
  If the variables $f,A,F$ satisfy
    \[
        f^q < A < \frac{f^q}{ H_q \left ( \omega_p \left (\frac{f^p}{p} \right ) \right )} ,
    \]
    then there exists $k \in (0,1)$ for which the function $g_k$ satisfies
    \[
    \int_0^1 g_k = f \ , \quad \int_0^1 g_k^q = A \quad \text{and} \quad \int_0^1 g_k^p = F \ ,
    \]
    so that
    \[
    B_{p,q} (f,A,F) \geq \int_0^1 \left ( \frac{1}{t} \int_0^t g_k \right )^p dt . \quad \qed
    \]
\end{corollary}

\vspace{10pt}
\noindent Nikolidakis Eleftherios\\
Associate Professor\\
Department of Mathematics \\
Panepistimioupolis, University of Ioannina, 45110\\
Greece\\
E-mail address: enikolid@uoi.gr


\begin{thebibliography}{99}


\bibitem {B1}
    D. L. Burkholder.
    \emph{Martingales and Fourier analysis in Banach spaces,}
    Probability and analysis (Varenna 1985), 61--108, Lecture Notes in Math., 1206, Springer, Berlin, 1986.

\bibitem {B2}
    D. L. Burkholder.
    \emph{Explorations in martingale theory and its applications,}
    {\'E}cole d'{\'E}t{\'e} de Probabilit{\'e}s de Saint-Flour XIX—1989, Springer, Berlin, Heidelberg, 1991. 1-66. 	
	
\bibitem{DN}
	Delis, Anastasios D.; Nikolidakis, Eleftherios N.,
	\emph{Sharp and general estimates for the Bellman function of three integral variables related to the dyadic maximal operator.},
	Colloq. Math. (2018), No. 1,
	27--37.
		
\bibitem{M}
	A. D. Melas,
	\emph{The Bellman functions of dyadic-like maximal operators and related inequalities},
	Adv. in Math. 192 (2005), no. 2,
	310--340.

\bibitem{N5}
	E. N. Nikolidakis,
	\emph{The Bellman function of the dyadic maximal operator in connection with the dyadic Carleson imbedding theorem},
	Forum Math. 34, No. 3, 781-792 (2022)	
		
\bibitem{N6}
	E. N. Nikolidakis,
	\emph{Extremal problems related to maximal dyadic like operators},
	J. Math. Anal. Appl. 369 (2010), no. 1,
	377--385.

\bibitem{N8}
	E. N. Nikolidakis,
	\emph{Extremal Sequences for the Bellman Function of the Dyadic Maximal Operator and Applications to the Hardy Operator},
	Canadian Journal of Mathematics 69, No.6, (2017), 1364-1384.
	
\bibitem{NM}
	E. N. Nikolidakis, A. D. Melas,
          \emph{A sharp integral rearrangement inequality for the dyadic maximal operator and applications},
	Appl. Comput. Harmon. Anal., 38 (2015), no. 2, 242--261.

\bibitem {W}
      G. Wang,
      \emph{ Sharp maximal inequalities for conditionally symmetric martingales and Brownian motion,}
       Proceedings of the American Mathematical Society 112 (1991): 579-586.

	
\end{thebibliography}
\end{document}